\numberwithin{equation}{section}
\numberwithin{figure}{section}
\numberwithin{table}{section}
\newtheorem{theorem}{Theorem}[section]
\newtheorem{remark}{Remark}[section]
\newtheorem{example}{Example}[section]
\begin{document}

\title{Well-balanced path-conservative discontinuous Galerkin methods with equilibrium preserving space for shallow water linearized moment equations}

\author[1]{\normalsize Ruilin Fan}
\author[2,3]{Julian Koellermeier}
\author[1,$*$]{Yinhua Xia}
\author[1,4]{Yan Xu}
\author[1]{Jiahui Zhang}
\affil[1]{School of Mathematical Sciences, University of Science and Technology of China}
\affil[2]{Bernoulli Institute, University of Groningen}
\affil[3]{Department of Mathematics, Computer Science and Statistics, Ghent University}
\affil[4]{Laoshan Laboratory}
\affil[$*$]{Corresponding author, Email: yhxia@ustc.edu.cn}
\renewcommand*{\Affilfont}{\small\it}  
	\renewcommand\Authands{ and }

\date{}
\maketitle


\begin{abstract}
This paper presents high-order, well-balanced, path-conservative discontinuous Galerkin (DG) methods for the shallow water linearized moment equations (SWLME), designed to preserve both still and moving water equilibrium states.
Unlike the multi-layer shallow water equations, which model vertical velocity variations using multiple distinct layers, the SWLME employs a polynomial expansion of velocity profiles with up to $N$ moments. This approach enables a more detailed representation of vertical momentum transfer and complex velocity profiles while retaining hyperbolicity. However, the presence of non-conservative terms and complex steady-state structures introduces significant numerical challenges.
Addressing these challenges, we develop path-conservative DG schemes grounded in the Dal Maso-LeFloch-Murat (DLM) theory for non-conservative products. Our method balances flux gradients, non-conservative terms, and source terms through equilibrium-preserving spaces. For the still water equilibrium, we reformulate the equations into a quasilinear form that eliminates source terms, inherently preserving steady states. For the moving water equilibrium, we extend the DG method by transforming conservative variables into equilibrium variables and employing linear segment paths.
Theoretical analysis and numerical experiments demonstrate that the proposed methods achieve exact equilibrium preservation while maintaining high-order accuracy, even in scenarios with vertical velocity variations and complex topographies.

\textbf{Keywords:} {Shallow water linearized moment equations; discontinuous Galerkin method;  equilibrium preserving space; non-conservative products; path-conservative scheme;  well-balanced}

\end{abstract}

\section{Introduction}
The shallow water equations (SWE) are a class of hyperbolic partial differential equations that describe fluid motion beneath a pressure surface \cite{vreugdenhil2013numerical}. They are derived by integrating the Navier-Stokes equations over the vertical direction under the assumption that horizontal length scales are much larger than vertical ones. As a fundamental model for simulating free-surface flows, the SWE are widely used in hydrology \cite{schijf1953theoretical}, oceanography, coastal engineering, and environmental sciences \cite{courtier1988global}. A key assumption of the SWE is that the horizontal velocity is represented by its average, which is constant throughout the water column, from the bottom to the surface, which is generally sufficient for simple flow conditions \cite{toro2013riemann}. However, this simplification becomes inadequate in scenarios involving strong vertical velocity gradients, such as turbulent boundary layers, sediment transport, or flows with significant frictional effects \cite{koellermeier2020analysis}.

To address these limitations, extensions of the SWE have been developed. One such extension is the multi-layer shallow water model, which introduces multiple superposed layers of immiscible fluids to represent vertical velocity variation \cite{burger2025multilayer}. While this approach captures some aspects of vertical structure, it is inherently limited to a piecewise constant approximation and cannot resolve the full vertical velocity profile. To overcome this, the shallow water moment equations (SWME) were proposed in \cite{kowalski2019moment}, introducing a framework that expands the vertical velocity profile using polynomial basis functions. In the SWME model, vertical variations in velocity are captured by augmenting the system with $N$ additional variables, referred to as moments, which represent deviations from the depth-averaged velocity. This formulation allows the velocity to vary continuously with depth, enabling the model to resolve higher-order moments of the flow and more accurately simulate vertical momentum transfer and turbulence effects. This advancement could be particularly important in applications such as snow avalanches and granular flows, where vertical velocity variations play a critical role in the dynamics \cite{christen2010ramms, kowalski2013shallow}.

Despite the enhanced physical fidelity of the shallow water moment equations (SWME), the model introduces two principal challenges. The first is the loss of hyperbolicity. As analyzed in \cite{koellermeier2020analysis}, the original SWME model becomes non-hyperbolic for moment orders $N>1$, which is a significant limitation for robust and stable numerical simulations.
To address this issue, the hyperbolic shallow water moment equations (HSWME) and the related $\beta$-HSWME were proposed. These formulations restore hyperbolicity while preserving a level of accuracy comparable to the original SWME model. This model has also been successfully applied in various settings, including sediment transport \cite{garres2020shallow}. The second challenge lies in the analysis of steady states. Accurately capturing steady states is crucial in numerical simulations, as it ensures that solutions initialized at equilibrium remain stable and free from spurious oscillations or artifacts. However, the non-conservative structure of the SWMEs complicates the identification and preservation of steady-state solutions. As noted in \cite{koellermeier2022127166}, this difficulty persists even in the hyperbolic variants, HSWME and  $\beta$-HSWME, due to their intrinsic non-conservative terms, which hinder straightforward analysis and numerical treatment of equilibrium states.

To overcome the limitations of the SWME and their hyperbolic variants, we adopt the shallow water linearized moment equations (SWLME), which are derived by linearizing the original SWME system around a constant velocity profile, with the linearization applied only to the higher-order moment equations to retain momentum balance and complex steady states \cite{koellermeier2022127166}.
In this formulation, nonlinear contributions of the basis coefficients are neglected in the higher moment equations, while linear contributions from all coefficients are retained. This simplification decouples certain terms in the system while preserving the essential structure of the model. Importantly, the SWLME is provably hyperbolic, which ensures numerical stability, and its steady states are significantly easier to characterize compared to those of the SWME, HSWME, and $\beta$-HSWME. In particular, the steady states can be computed analytically for a flat bottom as an extension of the typical shallow water jumps, due to the reduced number of non-conservative terms and the simplified structure of the higher order moment equations.

Numerical experiments in \cite{koellermeier2022127166} highlight the advantages of the SWLME over existing formulations such as the HSWME and $\beta$-HSWME. While the SWLME exhibits stable and accurate results in these simulations, a recent analytical study \cite{huang2022equilibrium} shows that one of its equilibrium states can be unstable under certain conditions, in contrast to the other moment models. Nevertheless, the model has consistently demonstrated robust performance in practical scenarios, especially in complex flow regimes over varying topography \cite{koellermeier2022127166}.

Another significant challenge posed by the SWMEs and SWLME is the presence of non-conservative products in the governing equations. This feature invalidates the classical notion of weak solutions and renders the traditional Rankine-Hugoniot jump conditions inapplicable, as emphasized in the foundational work of Dal Maso, LeFloch, and Murat (DLM) \cite{maso1995}. To address this issue, the DLM theory introduces a generalized framework for weak solutions, wherein discontinuities are connected by a family of paths in the phase space. This path-dependent formulation allows for a rigorous definition of weak solutions in non-conservative systems. Building upon this theoretical foundation, Par\'es \cite{pares2006} developed the concept of path-conservative numerical schemes, which generalize classical conservative schemes to accommodate non-conservative hyperbolic systems. This methodology has been successfully applied in various contexts, as demonstrated in the works of Gosse \cite{gosselaurent2001} and Par\'es and Castro \cite{pares2004well}.
Nevertheless, it is crucial to recognize that the choice of the path (and its numerical discretization) plays a decisive role in the accuracy of the numerical solution. As shown by Abgrall et al. \cite{abgrall20102759}, even when the theoretically correct path is known, a path-conservative scheme may still yield inaccurate results if the implementation fails to capture the path-dependent structure accurately. This underscores the need for careful path selection and rigorous analysis when designing numerical methods for non-conservative systems.


Well-balanced methods are a class of numerical schemes specifically designed to preserve steady-state solutions in the SWE and related models. Traditional numerical methods often introduce spurious oscillations or numerical artifacts when simulating steady flows, particularly in the presence of source terms and near non-trivial moving-water equilibrium \cite{xing2011advantage}. In contrast, well-balanced schemes aim to exactly preserve these steady states by ensuring a precise balance between the discretized numerical fluxes and source terms. This property is essential for accurately capturing small perturbations around steady flows and for maintaining numerical stability in long-term simulations \cite{xing2014survey}.

Designing well-balanced schemes for the SWLME poses greater challenges than for the classical SWE. While the SWE is based on the assumption of a vertically uniform velocity profile, the SWLME introduces higher-order moment terms to account for vertical velocity variations. This results in a more complex system involving additional equilibrium variables and more intricate steady-state structures. These steady states include both the still water equilibrium (e.g., ``lake-at-rest") and the moving water equilibrium (e.g., subcritical or supercritical flow with non-zero moments). Achieving a well-balanced scheme for the SWLME requires maintaining an exact balance among flux gradients, non-conservative products, and source terms. This intricate interplay makes the development of well-balanced methods, particularly for moving water equilibrium, considerably more difficult than in the SWE case.

Recent research has actively addressed the challenges associated with designing well-balanced numerical schemes for the SWLME. Koellermeier and Pimentel-Garc\'ia \cite{koellermeier2022127166} introduced first- and second-order well-balanced finite volume schemes capable of preserving steady states numerically. Their results demonstrated the effectiveness of the model in capturing complex velocity profiles and varying bottom topographies. However, their methods were validated primarily up to second-order accuracy, leaving open the question of how to extend these schemes to higher-order formulations. More recently, Caballero-C\'ardenas et al. \cite{caballero2025117788} proposed a semi-implicit, second-order well-balanced method for the SWLME that employs a wave-splitting approach, separating acoustic and material waves to enhance computational efficiency in low Froude number regimes. The method achieves exact preservation of steady states by incorporating a reconstruction operator specifically tailored to the SWLME's structure. Nonetheless, this approach is inherently restricted to second-order accuracy and may not generalize easily to higher-order schemes.
In addition to accuracy constraints, the incorporation of friction effects further complicates numerical design.
As highlighted in Pimentel-Garc\'ia \cite{pimentel2024fully}, the inclusion of friction terms introduces stiffness into the system, making explicit schemes inefficient or unstable. Although implicit-explicit (IMEX) time integration techniques have been explored to mitigate stiffness, these approaches often rely on iterative solvers, which can significantly increase computational cost and implementation complexity.
An alternative approach is proposed in \cite{huang2022equilibrium}, where a first-order operator splitting method is employed: the friction term is treated in an implicit step that can be solved analytically, resulting in a fully explicit scheme overall. However, this method is limited to first-order accuracy.

This paper presents the first high-order well-balanced path-conservative discontinuous Galerkin (PCDG) methods for the SWLME. For the still water equilibrium, also known as the ``lake-at-rest" steady state, we redefine the conservative variables and reformulate the system into a quasilinear form that eliminates the explicit source term. This reformulation enables a natural balance between the flux gradients and the source terms, allowing the scheme to exactly preserve still water equilibrium without relying on hydrostatic reconstruction techniques \cite{audusse2004afa} or numerical flux modifications, even in the presence of smooth or discontinuous bottom topographies.
For moving water equilibrium, we extend the DG method to equilibrium-preserving spaces. This approach was initially developed for the classical shallow water equations by Zhang et al. \cite{zhang2023}, and subsequently generalized to other hyperbolic balance laws, such as the Ripa model and the Euler equations with gravitational forces in \cite{zhang2024}, and further extended to non-conservative hyperbolic systems in \cite{zhang2025}.
In this work, we adapt the methodology to the more complex SWLME, which has more equilibrium variables and a more complicated balance structure. This demonstrates the first application of the method to a system with a potentially arbitrary number of equations.
By transforming the conservative variables to equilibrium variables and employing linear segment paths within the path-conservative framework, we ensure that the resulting schemes maintain exact well-balanced properties.
Theoretical analysis and numerical experiments confirm that the proposed methods achieve high-order accuracy and preserve both still and moving water equilibria. Moreover, the DG framework, combined with suitable limiters, allows the scheme to robustly capture shocks and fine-scale flow features. Numerical results demonstrate the effectiveness and resolution capability of the proposed methods, even in challenging flow configurations involving discontinuities and complex topography.

The rest of this paper is organized as follows. Section \ref{se-eq} describes the mathematical model of the SWLME and introduces the corresponding steady state solutions. Section \ref{se-sch} presents the path-conservative discontinuous Galerkin method designed to preserve the moving water and still water equilibrium for the SWLME. Section \ref{se-nu} provides numerical examples that demonstrate the well-balanced properties, high-order accuracy, and robustness in handling complex flow dynamics. Finally, Section \ref{se-con} provides concluding remarks and outlines future research directions.

\section{Shallow water linearized moment equations}\label{se-eq}
In this section, we will present the model of SWLME and its steady state solutions, together with a brief introduction to the DLM theory for the non-conservative products.

The one-dimensional SWLME (shallow water linearized moment equations) with $N$ moments, as introduced in \cite{koellermeier2022127166}, takes the form:
\begin{equation}\label{for:SWLME}
\partial_t\begin{bmatrix}h\\hu_m\\h\alpha_1\\\vdots\\h\alpha_N\end{bmatrix}
+\partial_x\begin{bmatrix}hu_m\\hu_m^2+\frac12gh^{2}+\frac13h\alpha_1^2+\dots+\frac1{2N+1}h\alpha_N^2\\2hu_m\alpha_1\\\vdots\\2hu_m\alpha_N\end{bmatrix}
=Q\partial_x\begin{bmatrix}h\\hu_m\\h\alpha_1\\\vdots\\h\alpha_N\end{bmatrix}
-\begin{bmatrix}0 \\ ghb_x \\ 0 \\ \vdots \\ 0\end{bmatrix},
\end{equation}
where $h$ is the water height, $u_m$ is the average horizontal velocity, and $\alpha_i,i=1,\dots,N$ are moment coefficients.
The horizontal velocity at depth $z$ can be expressed as
\begin{equation}\label{for:velocityExpan}
    u(\zeta)=u_m+\sum_{i=1}^N\alpha_i\phi_i(\zeta),\quad \phi_i(\zeta)=\frac{1}{i!}\frac{d^i}{d\zeta^i}(\zeta-\zeta^2)^i,
\end{equation}
where $\zeta$ is the scaled vertical coordinate $\zeta=(z-b)/h$.
The discharges are denoted by 
$$m_a=hu_m, m_i=h\alpha_i,i=1,\dots,N.$$ The matrix $Q=\operatorname{diag}(0,0,u_m,\dots,u_m)\in\mathbb{R}^{(N+2)\times(N+2)}$ represents the non-conservative coupling, $b$ denotes the bottom topography, and $g$ is the gravitational constant.

The conservative variables can be written as
\begin{equation}
    \boldsymbol{u}=(h,m_a,m_1,\dots,m_N)^T,
\end{equation}
where the superscript $T$ indicates the transpose. The system \eqref{for:SWLME} can be rewritten as a non-conservative balance law of the form
\begin{equation}\label{for:non-conBalanceLaw}
    \boldsymbol{u}_t+\boldsymbol{f}(\boldsymbol{u})_x+\mathcal{G}(\boldsymbol{u})\boldsymbol{u}_x=\boldsymbol{s}(\boldsymbol{u},b),
\end{equation}
where
\begin{equation}
    \boldsymbol{f}(\boldsymbol{u})=\begin{bmatrix}
    m_a\\hu_m^2+\frac{1}{2}gh^{2}+\sum\limits_{i=1}^N\frac{1}{2i+1}h\alpha_i^2\\2m_a \alpha_1\\\vdots\\2m_a \alpha_N
    \end{bmatrix},\,
    \mathcal{G}(\boldsymbol{u})=\begin{bmatrix}0& & & &\\&0& & &\\& &-u_m& &\\& & &\ddots&\\& & & &-u_m\end{bmatrix},\,
    \boldsymbol{s}(\boldsymbol{u},b)=\begin{bmatrix}
    0 \\ -ghb_x \\ 0 \\ \vdots \\ 0
    \end{bmatrix}.
\end{equation}

The Jacobian matrix $\mathcal{A}(\boldsymbol{u})$ of \eqref{for:SWLME} is given by
\begin{equation}\label{for:jacobianmat}
    \mathcal{A}(\boldsymbol{u}) = \frac{\partial\boldsymbol{f}}{\partial\boldsymbol{u}}+\mathcal{G}(\boldsymbol{u}) =
    \begin{bmatrix}
    0 & 1 & 0 & \cdots & 0 \\
    gh-u_{m}^{2}-\frac{\alpha_{1}^{2}}{3}-\dots-\frac{\alpha_{N}^{2}}{2N+1} & 2u_{m} & \frac{2\alpha_{1}}{3} & \cdots & \frac{2\alpha_{N}}{2N+1} \\
    -2u_{m}\alpha_{1} & 2\alpha_{1} & u_{m} & & \\
    \vdots & \vdots & & \ddots & \\
    -2u_{m}\alpha_{N} & 2\alpha_{N} & & & u_{m}
    \end{bmatrix}.
\end{equation}
According to Theorem 1 in \cite{koellermeier2022127166}, the characteristic polynomial of the matrix $\mathcal{A}(\boldsymbol{u})$ is
\begin{equation}
    P(\lambda)=(u_m-\lambda)\left[(\lambda-u_m)^2-gh-\sum_{i=1}^N\frac{3\alpha_i^2}{2i+1}\right],
\end{equation}
which leads to the eigenvalues describing the propagation speeds of the system
\begin{equation}\label{for:eigen}
    \lambda_{1,2}=u_{m}\pm\sqrt{gh+\sum_{i=1}^{N}\frac{3\alpha_{i}^{2}}{2i+1}},\quad\lambda_{i+2}=u_{m},\quad\mathrm{for~}i=1,\dots,N.
\end{equation}
The full set of independent eigenvectors can be found in \cite{koellermeier2022127166}. Therefore, the system is hyperbolic whenever the water height is positive.

The system \eqref{for:SWLME} admits non-trivial steady-state solutions, in which the sum of the non-zero flux gradient and the non-conservative product exactly balances the source term. The moving water equilibrium state solutions are given by
\begin{equation}\label{for:moving}
    \begin{aligned}
    hu_{m} &= \text{constant},\\
    E:=\frac{1}{2}u_{m}^{2}+g(h+b)+\frac{3}{2}\sum_{i=1}^{N}\frac{1}{2i+1}\alpha_{i}^{2}&=\text{constant},\\
    \frac{\alpha_{i}}{h} &= \text{constant}\quad\text{for~} i=1,\dots,N,\end{aligned}
\end{equation}
where $E$  denotes the energy \cite{caballero2025117788}.

As a special case of \eqref{for:moving}, when $u_m=0$ and $\alpha_i=0$, according to \eqref{for:velocityExpan}, we obtain $u=0$, i.e., the still water equilibrium (also known as the ``lake at rest" state). At this point, SWLME reduces to standard SWE
\begin{equation}\label{for:still}
    \begin{aligned}
    & h+b = \text{constant}, \\
    & u_m=0, \quad \alpha_i=0\quad\text{for~} i=1,\dots,N.
    \end{aligned}
\end{equation}

To handle the non-conservative products in the SWLME system \eqref{for:SWLME}, we adopt the Dal Maso-LeFloch-Murat (DLM) theory \cite{maso1995}, which provides a rigorous framework for defining weak solutions of non-conservative hyperbolic systems. This theory introduces a family of paths connecting the left and right states across discontinuities, thereby ensuring that the non-conservative terms are well-defined and the resulting weak solutions are stable.

At a discontinuity located at $x_0$, the non-conservative product is interpreted as a Borel measure $\mu$, defined by:
\begin{equation}
    \mu(\{x_0\}) = \int_0^1 \mathcal{G}(\Phi(s; u_L, u_R)) \frac{\partial \Phi}{\partial s}(s; u_L, u_R) \, ds,
\end{equation}
where $\mathcal{G}(\boldsymbol{u})$ is the non-conservative matrix in the non-conservative balance law \eqref{for:non-conBalanceLaw}, and $\Phi(s; u_L, u_R)$ is a Lipschitz continuous path function connecting the left state $u_L$ and the right state $u_R$, with $s\in [0,1]$ denoting the path parameter. By incorporating DLM theory into our numerical schemes, we can retain high-order accuracy while ensuring the well-balanced preservation of steady-state solutions in the SWLME system.

\section{Well-balanced DG methods with equilibrium preserving space}
\label{se-sch}
In this section, we introduce the equilibrium preserving space and equilibrium variables to construct a well-balanced scheme. These elements are designed to ensure an exact balance among the flux gradients, non-conservative products, and source terms, which are crucial for maintaining the well-balanced property. We first develop a path-conservative discontinuous Galerkin (PCDG) scheme tailored for the moving water equilibrium, which incorporates specialized treatments of both the numerical flux and the path integral to preserve equilibrium states. The corresponding PCDG scheme for the still water equilibrium, representing a simpler case, will be presented thereafter.

\subsection{Notations}
The computational domain $\Omega$ is divided into $nx$ non-overlapping subintervals, and we have
\begin{equation}
    \Omega=\bigcup_{j=1}^{nx}\mathcal{I}_j=\bigcup_{j=1}^{nx}[x_{j-\frac{1}{2}},x_{j+\frac{1}{2}}],
\end{equation}
where $x_{j\pm\frac{1}{2}}$ are the left and right cell interfaces of cell $\mathcal{I}_j$, respectively. The finite-dimensional test function space is defined by
\begin{equation}
    \mathcal{W}_h^k:=\{w:w|_{\mathcal{I}_j}\in P^k(\mathcal{I}_j),\forall  j = 1, \dots, nx \},
\end{equation}
where $P^k(\mathcal{I}_j)$ denotes the space of polynomials of degree at most $k$ on cell $\mathcal{I}_j$. Accordingly, the vector-valued finite element space with $N$ components  is defined as
\begin{equation}
    \mathcal{V}_h^k:=\left\{\boldsymbol{v}:\boldsymbol{v}=(v_1,\dots,v_{N})^T|v_1,\dots,v_{N}\in\mathcal{W}_h^k\right\}.
\end{equation}
In the DG framework, any unknown scalar function $u$ is approximated within the space $\mathcal{W}_h^k$. For simplicity, the discrete approximation is denoted by the same symbol $u$. At a cell interface  $x_{j+\frac{1}{2}}$, the one-sided limits from the left and right are denoted by
\begin{equation}
    u_{j+\frac{1}{2}}^-:=\lim_{\epsilon\to0^+}u(x-\epsilon),u_{j+\frac{1}{2}}^+:=\lim_{\epsilon\to0^+}u(x+\epsilon).
\end{equation}

\subsection{Conservative and equilibrium variables}
To preserve the moving water equilibrium \eqref{for:moving}, we adopt a strategy commonly used in path-conservative methods (see, e.g., \cite{castrodiaz2013, pares2008, munozruiz2011}) by introducing an auxiliary equation that is trivially satisfied
\begin{equation}
    b_t=0,
\end{equation}
reflecting the fact that the bottom topography $b(x)$ is time-independent. We then redefine the vector of conservative variables to include the bottom topography
\begin{equation}\label{var:cons}
    \boldsymbol{u}=(h,m_a,m_1,\dots,m_N,b)^T.
\end{equation}
With this extended system, equation \eqref{for:SWLME} can be rewritten in quasilinear form as
\begin{equation}\label{for:quasilin}
    \boldsymbol{u}_t+\boldsymbol{f}(\boldsymbol{u})_x+\mathcal{G}(\boldsymbol{u})\boldsymbol{u}_x=0,
\end{equation}
where
\begin{equation}\label{for:FandG}
    \boldsymbol{f}(\boldsymbol{u})=\begin{bmatrix}m_a\\hu_{m}^{2}+\frac{1}{2}gh^{2}+\sum\limits_{i=1}^N\frac{1}{2i+1}h\alpha_N^2\\2m_a \alpha_1\\\vdots\\2m_a \alpha_N\\0\end{bmatrix},\quad
    \mathcal{G}(\boldsymbol{u})=\begin{bmatrix}0& & & & &\\&0& & & &gh\\& &-u_{m}& & &\\& & &\ddots& &\\& & & &-u_{m}&\\& & & & &0\end{bmatrix}.
\end{equation}

To preserve the moving water equilibrium, we follow the approach proposed in \cite{zhang2025} by introducing the equilibrium variables
\begin{equation}\label{var:equ}
    \boldsymbol{v} = (E,hu_m,\frac{\alpha_1}{h},\dots,\frac{\alpha_N}{h},b)^T.
\end{equation}
The essential part of developing our well-balanced path-conservative DG method is the transformation between the conservative variables $\boldsymbol{u}$ and the equilibrium variables $\boldsymbol{v}$ defined in \eqref{for:moving}. On the one hand, the equilibrium variables can be calculated directly from the conservative variables as
\begin{equation}\label{for:equ2cons}
    \boldsymbol{v}=\boldsymbol{v}(\boldsymbol{u})=
    \begin{bmatrix}
    E\\hu_m\\\frac{\alpha_1}{h}\\\vdots\\\frac{\alpha_N}{h}\\b
    \end{bmatrix}=
    \begin{bmatrix}
    \frac{1}{2}\frac{(hu_m)^2}{h^2}+g(h+b)+\frac{3}{2}\sum_{i=1}^{N}\frac{1}{2i+1}\frac{(h\alpha_i)^2}{h^2}\\
    hu_m\\ \frac{h\alpha_1}{h^2} \\ \vdots \\ \frac{h\alpha_N}{h^2}\\b
    \end{bmatrix}.
\end{equation}
On the other hand, the inverse transformation from $\boldsymbol{v}$ to $\boldsymbol{u}$ is less straightforward due to the nonlinear dependence of the energy $E$ on the water height $h$. In particular, recovering $h$ requires solving the following quartic polynomial equation
\begin{equation}\label{for:quartic}
    q(h) = \left[\frac{3}{2}\sum_{i=1}^{N}\frac{1}{2i+1}(\frac{\alpha_{i}}{h})^2\right]h^4 + gh^3 + (gb-E)h^2 + \frac12(hu_m)^2 = 0.
\end{equation}
If all moment coefficients $\alpha_{i}=0$, this reduces to a cubic equation that can be solved analytically using the Froude number, cf.\ \cite{zhang2023}. Otherwise, the quartic equation must be solved numerically using a nonlinear iterative method, such as Newton's iteration.
Once $h$ is recovered, the remaining conservative variables are computed directly according to \eqref{for:equ2cons}.

The flux terms in the non-conservative balance law \eqref{for:quasilin} can be rewritten in terms of the equilibrium variables as
\begin{equation}\label{for:FandG2}
    \boldsymbol{f}(\boldsymbol{u})_x+\mathcal{G}(\boldsymbol{u})\boldsymbol{u}_x=\mathcal{L}(\boldsymbol{u})\widetilde{\boldsymbol{v}}(\boldsymbol{u})_x,
\end{equation}
 where $\mathcal{L}(\boldsymbol{u})$ is a matrix depending on the conservative variables, and $\widetilde{\boldsymbol{v}}(\boldsymbol{u})$ is a modified vector of equilibrium variables in which the bottom topography component is set to zero
\begin{equation}
    \widetilde{\boldsymbol{v}}=(E,hu_m,\frac{\alpha_1}{h},\dots,\frac{\alpha_N}{h},0)^T.
\end{equation}

Here, we consider a simple scenario where the equilibrium variables $\widetilde{\boldsymbol{v}}$ are defined through algebraic relations. It can be verified that the moving water equilibrium state \eqref{for:moving} satisfies identity \eqref{for:FandG2}, confirming that this formulation is consistent with the steady-state conditions. The matrix $\mathcal{L}(\boldsymbol{u})$ can be determined via the method of undetermined coefficients, yielding the following explicit expression
\begin{equation}
    \mathcal{L}(\boldsymbol{u}) =
    \begin{bmatrix}
    0 & 1 & 0 & \cdots & 0 & 0\\
    h & u & -\frac{1}{3}h^2\alpha_1 & \cdots & -\frac{1}{2N+1}h^2\alpha_{N} & 0\\
    0 & 2\alpha_1 & h^2u_{m} & \cdots & 0 & \vdots\\
    \vdots & \vdots & \vdots & \ddots & \vdots & \vdots\\
    0 & 2\alpha_{N} & 0 & \cdots & h^2u_{m} & 0\\
    0 & 0 & 0 & \cdots & 0 & 1
\end{bmatrix}.
\end{equation}
This formulation facilitates the numerical computation of the path integral term by simplifying its structure and ensuring compatibility with equilibrium preservation.

\subsection{The semi-discrete well-balanced scheme}
To establish a numerical scheme that preserves the moving water equilibrium, we perform the computation within the equilibrium preserving space. That is, we seek the approximation of the solution for the equilibrium variables $\boldsymbol{v}$ in the DG finite element space, rather than the approximation of the solution for the conservative variables $\boldsymbol{u}$. Consequently, we can rewrite the system \eqref{for:quasilin} in terms of the equilibrium variables $\boldsymbol{v}$ \eqref{var:equ}
\begin{equation}
    \boldsymbol{u}(\boldsymbol{v})_t+\boldsymbol{f}(\boldsymbol{u}(\boldsymbol{v}))_x+\mathcal{G}(\boldsymbol{u}(\boldsymbol{v}))\boldsymbol{u}(\boldsymbol{v})_x=0,
\end{equation}
where $\boldsymbol{f}(\boldsymbol{u})$ and $\mathcal{G}(\boldsymbol{u})$ are the flux term and non-conservative matrix in \eqref{for:FandG}.

This approach treats the conservative variables as nonlinear functions of the equilibrium variables. While the form of the governing equations remains unchanged, all computations are carried out in terms of the equilibrium variables. This constitutes the primary distinction between our method and standard DG schemes.

The well-balanced PCDG scheme for the moving water equilibrium is defined as: Find $\boldsymbol{v}\in\mathcal{V}_h^k$, s.t. for any test function $\boldsymbol{\varphi}\in\mathcal{V}_h^k$ we have
\begin{equation}\label{for:semi-PCDG2}
    \frac{d}{dt}\int_{\mathcal{I}_j}\boldsymbol{u}(\boldsymbol{v})\cdot\boldsymbol{\varphi}dx=\mathrm{RHS}_j^M(\boldsymbol{v},\boldsymbol{\varphi})
\end{equation}
with
\begin{equation}\label{for:RHS2}
    \begin{aligned}
    \mathrm{RHS}_{j}^{M}(\boldsymbol{v},\boldsymbol{\varphi}) & =\int_{\mathcal{I}_{j}}\boldsymbol{f}(\boldsymbol{u}(\boldsymbol{v}))\cdot\boldsymbol{\varphi}_{x}dx-\widehat{\boldsymbol{f}}_{j+\frac{1}{2}}^{\mathrm{mod}}\cdot\boldsymbol{\varphi}_{j+\frac{1}{2}}^{-}+\widehat{\boldsymbol{f}}_{j-\frac{1}{2}}^{\mathrm{mod}}\cdot\boldsymbol{\varphi}_{j-\frac{1}{2}}^{+} \\
    & -\int_{\mathcal{I}_{i}}\mathcal{G}(\boldsymbol{u}(\boldsymbol{v}))\boldsymbol{u}(\boldsymbol{v})_{x}\cdot\boldsymbol{\varphi}dx-\frac{1}{2}\boldsymbol{\varphi}_{j+\frac{1}{2}}^{-}\cdot g_{\Phi,j+\frac{1}{2}}-\frac{1}{2}\boldsymbol{\varphi}_{j-\frac{1}{2}}^{+}\cdot g_{\Phi,j-\frac{1}{2}}.
\end{aligned}
\end{equation}
Here, $\Phi_{j+\frac{1}{2}}(s)$ is a Lipschitz continuous path function defined for the moving equilibrium preserving PCDG scheme, and $\widehat{\boldsymbol{f}}_{j+\frac{1}{2}}^{\mathrm{mod}}$ is the modified numerical flux. The definitions of the numerical flux, the path function $\Phi_{j+\frac{1}{2}}(s)$, and the path integral term $g_{\Phi,j\pm\frac{1}{2}}$   will be provided in section \ref{se-flux}-\ref{se-path}.

\subsubsection{Special case for still water}
As the still water case is a special case of the moving water equilibrium, the moving water equilibrium-preserving PCDG scheme inherently satisfies the still water equilibrium condition. However, in the still water case, the scheme can be further simplified since there is no need to compute the equilibrium variables, leading to improved computational efficiency as explained in the following.  

Motivated by the approach proposed in \cite{zhang2025}, we define the water surface elevation as $H = h + b$. Using this, the equilibrium variables and the corresponding still water equilibrium state can be written as
\begin{equation}
    \boldsymbol{w}=(H,m_a,m_1,\dots,m_N)^T = (\text{constant},0,0,\dots,0)^T.
\end{equation}
Note that in this case, there is no nonlinear relationship between the equilibrium variables and the conservative variables. The transformation simply replaces $h$ with $H$. To distinguish this setting, we denote the equilibrium variables for the still water equilibrium-preserving scheme by $\boldsymbol{w}$. Accordingly, the system \eqref{for:SWLME} is reformulated in the following  form
\begin{equation}
    \boldsymbol{w}_t+\boldsymbol{f}(\boldsymbol{w})_x+\mathcal{G}(\boldsymbol{w})\boldsymbol{w}_x=0.
\end{equation}
The flux term and the non-conservative matrix are given by
\begin{equation}\label{equ:SWLMEstill}
\boldsymbol{f}(\boldsymbol{w})=
\begin{bmatrix}
m_a\\\frac{m_a^2}{H-b}+\frac12gH^{2}+\sum_{i=1}^N\frac1{2i+1}\frac{m_i^2}{H-b}\\2m_a\frac{m_1}{H-b}\\\vdots\\2m_a\frac{m_N}{H-b}
\end{bmatrix},
\quad\mathcal{G}(\boldsymbol{w})=
\begin{bmatrix}
    0 & & & & \\-gb & 0 & & & \\ & &-\frac{m_a}{H-b}& &\\ & & &\ddots& \\ & & & &-\frac{m_a}{H-b}
\end{bmatrix}.
\end{equation}
This approach avoids the influence of discontinuities in the water height $h$ on the flux terms, thereby achieving still water equilibrium preservation without requiring additional modifications to the numerical flux.

Following the framework introduced in \cite{rhebergen2008}, the well-balanced PCDG scheme for still water equilibrium is defined as follows: Find $\boldsymbol{w}\in\mathcal{V}_h^k$, s.t. for any test function $\boldsymbol{\varphi}\in\mathcal{V}_h^k$ we have
\begin{equation}\label{for:semi-PCDG1}
    \frac{d}{dt}\int_{\mathcal{I}_j}\boldsymbol{w}\cdot\boldsymbol{\varphi}dx=\mathrm{RHS}_j^S(\boldsymbol{w},b,\boldsymbol{\varphi})
\end{equation}
with
\begin{equation}\label{for:RHS1}
\begin{aligned}
    \mathrm{RHS}_{j}^{S}(\boldsymbol{w},b,\boldsymbol{\varphi}) & =\int_{\mathcal{I}_j}\boldsymbol{f}(\boldsymbol{w})\cdot\boldsymbol{\varphi}_xdx-\widehat{\boldsymbol{f}}_{j+\frac{1}{2}}\cdot\boldsymbol{\varphi}_{j+\frac{1}{2}}^-+\widehat{\boldsymbol{f}}_{j-\frac{1}{2}}\cdot\boldsymbol{\varphi}_{j-\frac{1}{2}}^+ \\
    & -\int_{\mathcal{I}_{j}}\mathcal{G}(\boldsymbol{w})\boldsymbol{w}_{x}\cdot\boldsymbol{\varphi}dx-\frac{1}{2}\boldsymbol{\varphi}_{j+\frac{1}{2}}^{-}\cdot g_{\Phi,j+\frac{1}{2}}-\frac{1}{2}\boldsymbol{\varphi}_{j-\frac{1}{2}}^{+}\cdot g_{\Phi,j-\frac{1}{2}}.
\end{aligned}
\end{equation}
To ensure the stability of the scheme, we use the monotone Lax-Friedrichs numerical flux, defined as
\begin{equation}\label{for:LF-flux}
    \widehat{\boldsymbol{f}}_{j+\frac{1}{2}}=\widehat{\boldsymbol{f}}(\boldsymbol{w}_{j+\frac{1}{2}}^-,\boldsymbol{w}_{j+\frac{1}{2}}^+)=\frac{1}{2}\left(\boldsymbol{f}(\boldsymbol{w}_{j+\frac{1}{2}}^-)+\boldsymbol{f}(\boldsymbol{w}_{j+\frac{1}{2}}^+)\right)-\frac{\alpha}{2}(\boldsymbol{w}_{j+\frac{1}{2}}^+-\boldsymbol{w}_{j+\frac{1}{2}}^-),
\end{equation}
and
\begin{equation}
    \alpha=\max_{\boldsymbol{w}}\{|\lambda_1(\boldsymbol{w})|,\dots,|\lambda_{N+3}(\boldsymbol{w})|\},
\end{equation}
where $\lambda_i(\boldsymbol{w})$ are the eigenvalues of the Jacobi matrix $\mathcal{A}$ in \eqref{for:jacobianmat} and eigenvalues are given in \eqref{for:eigen}. The maximum is taken globally over the computational domain. The definition and computation of the path integral term $g_{\Phi,j\pm\frac{1}{2}}$ is provided in section \ref{se-path}.



\subsubsection{Flux modification}\label{se-flux}
For the moving water equilibrium-preserving scheme, we follow the approach presented in \cite{zhang2025} and modify the Lax-Friedrichs numerical flux using the hydrostatic reconstruction technique, originally proposed in \cite{audusse2004afa}. The modified numerical flux is defined as
\begin{equation}\label{for:LF-fluxmod}
    \widehat{\boldsymbol{f}}_{j+\frac{1}{2}}^{\mathrm{mod}}=\widehat{\boldsymbol{f}}^{\mathrm{mod}}(\boldsymbol{u}_{j+\frac{1}{2}}^-,\boldsymbol{u}_{j+\frac{1}{2}}^+)=\frac{1}{2}\left(\boldsymbol{f}(\boldsymbol{u}_{j+\frac{1}{2}}^-)+\boldsymbol{f}(\boldsymbol{u}_{j+\frac{1}{2}}^+)\right)-\frac{\alpha}{2}(\boldsymbol{u}_{j+\frac{1}{2}}^{*,+}-\boldsymbol{u}_{j+\frac{1}{2}}^{*,-}),
\end{equation}
and
\begin{equation}
    \alpha=\max_{\boldsymbol{u}}\{|\lambda_1(\boldsymbol{u})|,\dots,|\lambda_{N+3}(\boldsymbol{u})|\},
\end{equation}
with $\lambda_i(\boldsymbol{u})$ the eigenvalues of the Jacobi matrix $\mathcal{A}$ in \eqref{for:jacobianmat} and the maximum is taken
globally over the computational domain. The modified cell interface values $\boldsymbol{u}_{j+\frac{1}{2}}^{*,\pm}$ are defined as
\begin{equation}\label{for:umod}
    \boldsymbol{u}_{j+\frac{1}{2}}^{*,\pm}=(h_{j+\frac{1}{2}}^{*,\pm},(hu_m)_{j+\frac{1}{2}}^{*,\pm},(h\alpha_1)_{j+\frac{1}{2}}^{*,\pm},\dots,(h\alpha_N)_{j+\frac{1}{2}}^{*,\pm},b_{j+\frac{1}{2}}^{*})^T,
\end{equation}
where the reconstructed water height $h_{j+\frac{1}{2}}^{*,\pm}$ is obtained by solving the following quartic equation numerically
\begin{equation}\label{for:quartic2}
    \left[\frac{3}{2}\sum_{i=1}^{N}\frac{1}{2i+1}\left((\frac{\alpha_{i}}{h})_{j+\frac{1}{2}}^{\pm}\right)^2\right](h_{j+\frac{1}{2}}^{*,\pm})^4 + g(h_{j+\frac{1}{2}}^{*,\pm})^3 + (gb_{j+\frac{1}{2}}^{*}-E_{j+\frac{1}{2}}^{\pm})(h_{j+\frac{1}{2}}^{*,\pm})^2 + \frac12\left((hu_m)_{j+\frac{1}{2}}^{\pm}\right)^2 = 0.
\end{equation}
The interface bottom topography value $b_{j+\frac{1}{2}}^*$ is taken as the minimum of the two neighboring values to preserve the well-balanced property
\begin{equation}
    b_{j+\frac{1}{2}}^*=\min{(b_{j+\frac{1}{2}}^-,b_{j+\frac{1}{2}}^+)}.
\end{equation}

\subsubsection{Path integral term calculation}\label{se-path}
For both the moving water and still water equilibrium-preserving schemes, we adopt the definition of non-conservative products from \cite{maso1995}. Specifically, we define $\Phi_{j+\frac{1}{2}}(s)$ as a Lipschitz continuous path function, where a linear segment path connects the left and right states across the discontinuity. The precise definition is given by
\begin{equation}\label{for:linpath1}
    \Phi_{j+\frac{1}{2}}(s):=\Phi(s;\boldsymbol{u}_{j+\frac{1}{2}}^{-},\boldsymbol{u}_{j+\frac{1}{2}}^{+})=\boldsymbol{u}_{j+\frac{1}{2}}^{-}+s(\boldsymbol{u}_{j+\frac{1}{2}}^{+}-\boldsymbol{u}_{j+\frac{1}{2}}^{-}),\quad s\in[0,1].
\end{equation}
The path integral term $g_{\Phi,j\pm\frac{1}{2}}$ in \eqref{for:RHS2} and \eqref{for:RHS1}, which represents the contributions of the non-conservative products, is given by
\begin{equation}\label{for:pathinter}
    g_{\Phi,j+\frac{1}{2}}=\int_0^1\mathcal{G}(\Phi_{j+\frac{1}{2}}(s))\frac{\partial}{\partial s}\Phi_{j+\frac{1}{2}}(s)ds.
\end{equation}

For the still water equilibrium preserving scheme, each component of $g_{\Phi,j+\frac{1}{2}}$ can be computed explicitly, owing to the linear segment path.
\begin{equation}\label{for:integralCal1}
    \begin{aligned}
    g_{\Phi,j+\frac{1}{2}}^{[2]} & =\int_{0}^{1}g\left((b)_{j+\frac{1}{2}}^{-}+s\left(b_{j+\frac{1}{2}}^{+}-b_{j+\frac{1}{2}}^{-}\right)\right)(H_{j+\frac{1}{2}}^{+}-H_{j+\frac{1}{2}}^{-})ds \\
    & =\frac{1}{2}g\left(b_{j+\frac{1}{2}}^{-}+b_{j+\frac{1}{2}}^{+}\right)(H_{j+\frac{1}{2}}^{+}-H_{j+\frac{1}{2}}^{-}),
\end{aligned}
\end{equation}
\begin{equation}\label{for:integralCal2}
    \begin{aligned}
    g_{\Phi,j+\frac{1}{2}}^{[i+2]} & =- \int_{0}^{1}\left((u_m)_{j+\frac{1}{2}}^{-}+s\left((u_m)_{j+\frac{1}{2}}^{+}-(u_m)_{j+\frac{1}{2}}^{-}\right)\right)\left((m_i) _{j+\frac{1}{2}}^{+}-(m_i)_{j+\frac{1}{2}}^{-}\right)ds \\
    & =- \left((u_m)_{j+\frac{1}{2}}^{-}+(u_m)_{j+\frac{1}{2}}^{+}\right)\left((m_i)_{j+\frac{1}{2}}^{+}-(m_i)_{j+\frac{1}{2}}^{-}\right),\quad i=1,\dots,N,
\end{aligned}
\end{equation}
where $$(u_m)_{j+\frac{1}{2}}^{\pm} = \frac{(m_a)_{j+\frac{1}{2}}^{\pm}}{H_{j+\frac{1}{2}}^{\pm}-b_{j+\frac{1}{2}}^{\pm}}, \; g_{\Phi,j+\frac{1}{2}}^{[1]} = 0.$$
Other types of paths can also be used but typically require numerical integration, while our approach with a linear path allows for exact computation of the path integration and does not involve any quadrature formula.

For the moving water equilibrium preserving scheme, we adopt the approach in \cite{KURGANOV2023, zhang2025} to develop a well-balanced formulation for computing this term. Notably, at the cell interface $x_{j+\frac{1}{2}}$, the relation \eqref{for:FandG2} holds, and we obtain
\begin{equation}\label{for:quasilin2}
    \frac{\partial}{\partial s}\boldsymbol{f}(\Phi_{j+\frac{1}{2}}(s))+\mathcal{G}(\Phi_{j+\frac{1}{2}}(s))\frac{\partial}{\partial s}\Phi_{j+\frac{1}{2}}(s)=\mathcal{L}(\Phi_{j+\frac{1}{2}}(s))\frac{\partial}{\partial s}\widetilde{\boldsymbol{v}}(\Phi_{j+\frac{1}{2}}(s)).
\end{equation}
Substituting \eqref{for:quasilin2} into the integral term \eqref{for:pathinter} yields
\begin{equation}\label{for:gpath}
    \begin{aligned}
    g_{\Phi,j+\frac12} &= \int_{0}^{1}{\mathcal{G}}(\Phi_{j+\frac{1}{2}}(s))\frac{\partial}{\partial s}\Phi_{j+\frac{1}{2}}(s)~ds \\
    &= \int_{0}^{1}\mathcal{L}(\Phi_{j+\frac{1}{2}}(s))\frac{\partial}{\partial s}\widetilde{\boldsymbol{v}}(\Phi_{j+\frac{1}{2}}(s)) ds-\int_{0}^{1}\frac{\partial}{\partial s}\boldsymbol{f}(\Phi_{j+\frac{1}{2}}(s)) ds \\
    &= \int_{0}^{1}\mathcal{L}(\Phi_{j+\frac{1}{2}}(s))\frac{\partial}{\partial s}\widetilde{\boldsymbol{v}}(\Phi_{j+\frac{1}{2}}(s)) ds-\boldsymbol{f}(\boldsymbol{u}_{j+\frac{1}{2}}^{+})+\boldsymbol{f}(\boldsymbol{u}_{j+\frac{1}{2}}^{-}) \\
    &= \int_{0}^{1}\mathcal{L}(\Phi_{j+\frac{1}{2}}(s))\left(\widetilde{\boldsymbol{v}}_{j+\frac{1}{2}}^{+}-\widetilde{\boldsymbol{v}}_{j+\frac{1}{2}}^{-}\right) ds -\boldsymbol{f}(\boldsymbol{u}_{j+\frac{1}{2}}^{+})+\boldsymbol{f}(\boldsymbol{u}_{j+\frac{1}{2}}^{-}).
\end{aligned}
\end{equation}
Analogous to \eqref{for:integralCal1} and \eqref{for:integralCal2}, the computation can be performed explicitly and no quadrature formula is needed. For brevity, we omit the details.

\subsubsection{Coupling term integral calculation}
To construct a well-balanced scheme, another essential requirement is the precise evaluation of the non-conservative product integral
\begin{equation}
    \int_{\mathcal{I}_j}\mathcal{G}(\boldsymbol{u}(\boldsymbol{v}))\boldsymbol{u}(\boldsymbol{v})_x\cdot\boldsymbol{\varphi}dx.
\end{equation}
For the still water equilibrium, this integral can be computed directly, as the calculation does not involve the nonlinear relationship between the equilibrium and conservative variables. However, the moving water equilibrium scheme involves the coupling terms
\begin{equation}
    \int_{\mathcal{I}_j}gh(\boldsymbol{v})b_x, \quad \int_{\mathcal{I}_j}-u_m(\boldsymbol{v})\left(h(\boldsymbol{v})\alpha_i(\boldsymbol{v})\right)_x.
\end{equation}
Given that our computational variables are the equilibrium variables $\boldsymbol{v}$, the conservative variables are non-polynomial functions of these equilibrium variables. An explicit expression for $\left(h(v)\alpha_i(v)\right)_x$ is not readily available. Instead, we apply the chain rule to express this term in terms of the derivatives of the equilibrium variables
\begin{equation}
    (h \alpha_i)_x = h (\alpha_i)_x + \alpha_i h_x,
\end{equation}
\begin{equation}
    (\alpha_i)_x = \left(\frac{\alpha_i}{h}h\right)_x = \left(\frac{\alpha_i}{h}\right)_x h + \frac{\alpha_i}{h} h_x,
\end{equation}
\begin{equation}
    h_x = \frac{\partial h}{\partial\boldsymbol{v}}\cdot \boldsymbol{v}_x
    = \frac{\partial h}{\partial E} E_x + \frac{\partial h}{\partial hu_m} (hu_m)_x + \sum_{i=1}^N \frac{\partial h}{\partial (\frac{\alpha_i}{h})} \left(\frac{\alpha_i}{h}\right)_x + \frac{\partial h}{\partial b} b_x.
\end{equation}
Here, the derivatives $\boldsymbol{v}_x$ can be computed directly from the known equilibrium variable representation.  The partial derivatives $\frac{\partial h}{\partial\boldsymbol{v}}$ are obtained using the implicit function theorem, since $h$ is implicitly defined by the quartic equation $q(h) = 0$ from \eqref{for:quartic}. For instance
\begin{equation}
    \frac{\partial h}{\partial E} = -\frac{\frac{\partial q}{\partial E}}{\frac{\partial q}{\partial h}}.
\end{equation}
Other partial derivatives are computed analogously.

\subsection{The fully-discrete well-balanced scheme}
For hyperbolic conservation laws, high-order accuracy in both space and time is often achieved by coupling a spatial discretization (e.g., DG or finite volume) with strong stability preserving Runge-Kutta (SSP-RK) or multistep time integration methods \cite{gottlieb2001, shuchiwang1988}. These time discretization strategies extend naturally to non-conservative hyperbolic systems as well.

In this work, we adopt a third-order SSP-RK method for the time discretization. The fully discrete formulation is given by
\begin{equation}\label{for:SSP-RK2}
    \begin{aligned}
     \int_{I_{j}}\boldsymbol{u}(\boldsymbol{v}^{(1)})\cdot\boldsymbol{\varphi}dx&=\int_{I_{j}}\boldsymbol{u}(\boldsymbol{v}^{n})\cdot\boldsymbol{\varphi}dx+\Delta t\mathrm{RHS}_{j}(\boldsymbol{v}^{n},\boldsymbol{\varphi}), \\
     \int_{I_{j}}\boldsymbol{u}(\boldsymbol{v}^{(2)})\cdot\boldsymbol{\varphi}dx&=\frac{3}{4}\int_{I_{j}}\boldsymbol{u}(\boldsymbol{v}^{n})\cdot\boldsymbol{\varphi}dx+\frac{1}{4}\left(\int_{I_{j}}\boldsymbol{u}(\boldsymbol{v}^{(1)})\cdot\boldsymbol{\varphi}dx+\Delta t\mathrm{RHS}_{j}(\boldsymbol{v}^{(1)},\boldsymbol{\varphi})\right), \\
     \int_{I_{j}}\boldsymbol{u}(\boldsymbol{v}^{n+1})\cdot\boldsymbol{\varphi}dx&=\frac{1}{3}\int_{I_{j}}\boldsymbol{u}(\boldsymbol{v}^{n})\cdot\boldsymbol{\varphi}dx+\frac{2}{3}\left(\int_{I_{j}}\boldsymbol{u}(\boldsymbol{v}^{(2)})\cdot\boldsymbol{\varphi}dx+\Delta t\mathrm{RHS}_{j}(\boldsymbol{v}^{(2)},\boldsymbol{\varphi})\right),
\end{aligned}
\end{equation}
where $\mathrm{RHS}_{j}(\boldsymbol{v}^{n},\boldsymbol{\varphi}) = \mathrm{RHS}_{j}^{M} \text{ or } \mathrm{RHS}_{j}^{S}$ refers to the spatial discretization operator defined in \eqref{for:RHS2} and \eqref{for:RHS1}, corresponding to the moving or still water equilibrium cases, respectively.
The $\mathrm{RHS}_{j}^{M}$ case also preserves the still water equilibrium but involves higher computational cost. Therefore, if the focus is only on the still water case, we use $\mathrm{RHS}_{j}^{S}$, which results in lower computational expense.

Due to the use of equilibrium variables $\boldsymbol{v}$, which are nonlinear functions of the conservative variables $\boldsymbol{u}$, each stage of the Runge-Kutta time stepping in \eqref{for:SSP-RK2} results in a nonlinear system of equations. As a result, Newton's method is employed locally in each element to solve for $\boldsymbol{v}^{(1)}$, $\boldsymbol{v}^{(2)}$, and $\boldsymbol{v}^{n}$ at each stage of the time integration.

\subsubsection{Newton's method}
The SSP-RK scheme can be expressed as a convex combination of multiple forward Euler steps. To illustrate the Newton iteration procedure, we focus on the forward Euler update, as represented by the first stage in \eqref{for:SSP-RK2}
\begin{equation}\label{for:1equation}
    \int_{I_j}\boldsymbol{u}(\boldsymbol{v}^{n+1})\cdot\boldsymbol{\varphi}dx=\mathcal{R}_j(\boldsymbol{v}^n,\boldsymbol{\varphi}),
\end{equation}
where
\begin{equation}\label{for:Lj}
    \mathcal{R}_j(\boldsymbol{v}^n,\boldsymbol{\varphi})=\int_{I_j}\boldsymbol{u}(\boldsymbol{v}^n)\cdot\boldsymbol{\varphi}dx+\Delta t\mathrm{RHS}_j^M(\boldsymbol{v}^n,\boldsymbol{\varphi}).
\end{equation}
This simplification allows us to demonstrate the nonlinear solve at each stage while retaining the essential structure of the method.

Let $\mathcal{R}_j^{[i]} = \mathcal{R}_j(\boldsymbol{v}^{n,[i]},\boldsymbol{\varphi}^{[i]}),i=1,\dots,N+3$ denote the $i$-th component of the right-hand side of the equation \eqref{for:1equation}, i.e. the equation satisfied by the $i$-th equilibrium variable.
Assuming $\boldsymbol{v}^n\in\mathcal{V}_h^k$ is given, the vector $\mathcal{R}_j$ can be explicitly calculated according to \eqref{for:Lj}. In particular, the quantity $hu_m^{n+1}$ can be obtained directly from $\mathcal{R}_j^{[2]}$. Furthermore, a straightforward calculation shows that $\mathcal{R}_j^{[N+3]}=0$, indicating that the bottom topography $b$ remains time-independent. Consequently, no additional numerical dissipation is introduced by the proposed PCDG scheme.

What remains to be determined are the components $E^{n+1},(\frac{\alpha_1}{h})^{n+1},\dots,(\frac{\alpha_N}{h})^{n+1}\in\mathcal{W}_h^k$, whose polynomial coefficients are computed iteratively by solving the following nonlinear system
\begin{equation}\label{for:nonlinSys}
\begin{aligned}
    & \int_{I_j}h^{n+1}\left(\sum_{i=1}^kE^{i,n+1}\varphi_1^i,hu_m^{n+1},\sum_{i=1}^k(\frac{\alpha_1}{h})^{i,n+1}\varphi_3^i,\dots,\sum_{i=1}^k(\frac{\alpha_N}{h})^{i,n+1}\varphi_{N+2}^i,b\right)\varphi_1dx = \mathcal{R}_j^{[1]}, \\
    & \int_{I_j}h\alpha_1^{n+1}\left(\sum_{i=1}^kE^{i,n+1}\varphi_1^i,hu_m^{n+1},\sum_{i=1}^k(\frac{\alpha_1}{h})^{i,n+1}\varphi_3^i,\dots,\sum_{i=1}^k(\frac{\alpha_N}{h})^{i,n+1}\varphi_{N+2}^i,b\right)\varphi_3dx = \mathcal{R}_j^{[3]}, \\
    & \hspace{7cm}\vdots \\
    & \int_{I_j}h\alpha_N^{n+1}\left(\sum_{i=1}^kE^{i,n+1}\varphi_1^i,hu_m^{n+1},\sum_{i=1}^k(\frac{\alpha_1}{h})^{i,n+1}\varphi_3^i,\dots,\sum_{i=1}^k(\frac{\alpha_N}{h})^{i,n+1}\varphi_{N+2}^i,b\right)\varphi_{N+2}dx = \mathcal{R}_j^{[N+2]}.
\end{aligned}
\end{equation}
For additional details on solving this system and selecting appropriate initial guesses to initiate the Newton iteration for \eqref{for:nonlinSys}, we refer the reader to the comprehensive discussion in \cite{zhang2023}.

\begin{remark}
   We provide a good initial guess for the Newton iteration. Given $\boldsymbol{v}^n\in\mathcal{V}_h^k$, we can construct
    \begin{equation}
        \widetilde{\boldsymbol{u}}(x)=(\widetilde{h},hu_m,\widetilde{h\alpha_1},\dots,\widetilde{h\alpha_N},b)\in\mathcal{V}_h^k,
    \end{equation}
   by solving the moment-matching condition
    \begin{equation}
        \int_{I_j}\widetilde{\boldsymbol{u}}(x)\cdot\boldsymbol{\varphi}dx=\mathcal{R}_j(\boldsymbol{v}^n,\boldsymbol{\varphi}).
    \end{equation}
    Using the transformation \eqref{for:equ2cons}, we compute the corresponding equilibrium variables
    \begin{equation}
        \widetilde{E} = \frac{1}{2}\frac{(hu_m)^2}{\widetilde{h}^2}+g(\widetilde{h}+b)+\frac{3}{2}\sum_{i=1}^{N}\frac{1}{2i+1}\frac{(\widetilde{h\alpha_i})^2}{\widetilde{h}^2},
    \end{equation}
    \begin{equation}
        \widetilde{\frac{\alpha_i}{h}} = \frac{\widetilde{h\alpha_i}}{\widetilde{h}^2},\quad i = 1,\dots,N.
    \end{equation}
    Finally, the $L^2$ projections of $\widetilde{E},\widetilde{\frac{\alpha_i}{h}}$ onto the polynomial space $\mathcal{W}_h^k$ are used as initial guesses for the Newton iteration.
\end{remark}

\subsubsection{Slope limiter}
In the discontinuous Galerkin (DG) framework, slope limiters are essential for suppressing spurious numerical oscillations, particularly in the presence of discontinuities. In this study, we adopt the total variation bounded (TVB) limiter \cite{cockburn1989, cockburn1998}, which is applied after each stage of the SSP-RK time discretization.

To preserve the generalized moving water equilibrium, we follow the strategy proposed in \cite{zhang2023} and modify the limiting process by applying it selectively to the local characteristic fields of the equilibrium variables
\begin{equation}
    \boldsymbol{v}^e = (E,hu_m,\frac{\alpha_1}{h},\dots,\frac{\alpha_N}{h})^T,\quad
    \boldsymbol{u}^e = (h,hu_m,h\alpha_1,\dots,h\alpha_N)^T.
\end{equation}
As in \cite{zhang2025}, the eigensystem used in the characteristic decomposition is obtained from
\begin{equation}
    \boldsymbol{A}_{j+1/2}=\left[\frac{\partial\boldsymbol{v}^e}{\partial\boldsymbol{u}^e}\mathcal{A}\left(\frac{\partial\boldsymbol{v}^e}{\partial\boldsymbol{u}^e}\right)^{-1}\right]_{\boldsymbol{u}^e_{j+1/2}},
\end{equation}
where $\mathcal{A}$ is the Jacobian matrix given in \eqref{for:jacobianmat}, and $\boldsymbol{u}^e_{j+1/2}$ denotes the average state, chosen as the simple mean of the left and right values. This tailored approach ensures that the limiter does not interfere with the delicate balance conditions required for equilibrium preservation.

The TVB limiter procedure consists of two main steps. First, we identify cells that require limiting by examining the equilibrium variables $\boldsymbol{v}^e$, represented as piecewise polynomials in the DG space at each time step. Second, the limiter is applied only to those cells flagged as troubled, and it operates directly on the polynomials $\boldsymbol{v}^e$.  If the numerical solution satisfies a steady state such that  $\boldsymbol{v} ^e$ is constant across the cell, the limiter is automatically deactivated. This design ensures that the limiting operation does not interfere with the equilibrium structure, thereby preserving the well-balanced property of the PCDG scheme.

\subsection{Well-balanced property}
We now introduce the following theorem to establish the well-balanced property of the proposed PCDG scheme for both moving water and still water equilibrium states.
\begin{theorem}\label{the:moving}
    The semi-discrete path-conservative discontinuous Galerkin method \eqref{for:semi-PCDG2}-\eqref{for:RHS2} with the modified numerical flux \eqref{for:LF-fluxmod} and path integral \eqref{for:gpath} is exact for the moving equilibrium state \eqref{for:moving}.
\end{theorem}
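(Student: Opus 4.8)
\section*{Proof proposal}

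The plan is to show that whenever the discrete data lies in the moving water equilibrium \eqref{for:moving}, the right-hand side $\mathrm{RHS}_j^M(\boldsymbol{v},\boldsymbol{\varphi})$ in \eqref{for:RHS2} vanishes for every test function $\boldsymbol{\varphi}\in\mathcal{V}_h^k$; by \eqref{for:semi-PCDG2} this forces $\frac{d}{dt}\int_{\mathcal{I}_j}\boldsymbol{u}(\boldsymbol{v})\cdot\boldsymbol{\varphi}\,dx=0$, so the equilibrium is preserved exactly. First I would record the discrete form of the equilibrium: by \eqref{for:moving} and the definition \eqref{var:equ}, the first $N+2$ equilibrium components $E$, $hu_m$, $\alpha_i/h$ are globally constant and hence represented exactly in $\mathcal{V}_h^k$, while only the bottom component $b$ varies in space. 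In particular the modified vector $\widetilde{\boldsymbol{v}}$ is constant, so $\widetilde{\boldsymbol{v}}_x\equiv 0$ in every cell and $\widetilde{\boldsymbol{v}}_{j+\frac12}^+=\widetilde{\boldsymbol{v}}_{j+\frac12}^-$ at every interface.

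Next I would treat the volume contributions. Since $\boldsymbol{u}(\boldsymbol{v})$ is a smooth function of $x$ (through $b(x)$ and the quartic solve), the algebraic identity \eqref{for:FandG2} holds pointwise, and with $\widetilde{\boldsymbol{v}}_x\equiv0$ it gives $\boldsymbol{f}(\boldsymbol{u})_x+\mathcal{G}(\boldsymbol{u})\boldsymbol{u}_x=\mathcal{L}(\boldsymbol{u})\widetilde{\boldsymbol{v}}_x=0$ inside each cell. Integrating the flux volume term in \eqref{for:RHS2} by parts and combining it with the coupling integral $-\int_{\mathcal{I}_j}\mathcal{G}(\boldsymbol{u})\boldsymbol{u}_x\cdot\boldsymbol{\varphi}\,dx$, the interior integrals collapse into $-\int_{\mathcal{I}_j}(\boldsymbol{f}(\boldsymbol{u})_x+\mathcal{G}(\boldsymbol{u})\boldsymbol{u}_x)\cdot\boldsymbol{\varphi}\,dx=0$, leaving only the boundary evaluations $\boldsymbol{f}(\boldsymbol{u}_{j+\frac12}^-)\cdot\boldsymbol{\varphi}_{j+\frac12}^- - \boldsymbol{f}(\boldsymbol{u}_{j-\frac12}^+)\cdot\boldsymbol{\varphi}_{j-\frac12}^+$.

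I would then collect all interface terms. Using $\widetilde{\boldsymbol{v}}_{j+\frac12}^+=\widetilde{\boldsymbol{v}}_{j+\frac12}^-$, the path integral \eqref{for:gpath} collapses to $g_{\Phi,j+\frac12}=\boldsymbol{f}(\boldsymbol{u}_{j+\frac12}^-)-\boldsymbol{f}(\boldsymbol{u}_{j+\frac12}^+)$. Substituting this together with the central part of the modified flux \eqref{for:LF-fluxmod}, the coefficient multiplying $\boldsymbol{\varphi}_{j+\frac12}^-$ reduces to $\frac{\alpha}{2}(\boldsymbol{u}_{j+\frac12}^{*,+}-\boldsymbol{u}_{j+\frac12}^{*,-})$, and an identical computation at $x_{j-\frac12}$ produces $-\frac{\alpha}{2}(\boldsymbol{u}_{j-\frac12}^{*,+}-\boldsymbol{u}_{j-\frac12}^{*,-})$ against $\boldsymbol{\varphi}_{j-\frac12}^+$; all physical flux contributions cancel, leaving only the numerical-dissipation term. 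Because $\boldsymbol{\varphi}$ is arbitrary, it remains to prove that the reconstructed interface states coincide, $\boldsymbol{u}_{j+\frac12}^{*,+}=\boldsymbol{u}_{j+\frac12}^{*,-}$.

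This last step is the main obstacle and the heart of the argument. At equilibrium the inputs to the reconstruction quartic \eqref{for:quartic2} --- namely $E^\pm$, $(hu_m)^\pm$ and $(\alpha_i/h)^\pm$ --- take the same constant value on both sides of the interface, and the shared bottom value $b_{j+\frac12}^*=\min(b_{j+\frac12}^-,b_{j+\frac12}^+)$ enters both. Hence the quartics defining $h_{j+\frac12}^{*,+}$ and $h_{j+\frac12}^{*,-}$ have identical coefficients, so provided the same root-selection rule (the physically relevant branch) is used in the Newton solve, $h_{j+\frac12}^{*,+}=h_{j+\frac12}^{*,-}$. Recovering the reconstructed discharges from the common equilibrium variables and this common height then yields $(hu_m)^{*,+}=(hu_m)^{*,-}$ and $(h\alpha_i)^{*,+}=(h\alpha_i)^{*,-}$, while $b^*$ is shared by construction; thus $\boldsymbol{u}_{j+\frac12}^{*,+}=\boldsymbol{u}_{j+\frac12}^{*,-}$. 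The dissipation terms therefore vanish, every interface coefficient is zero, and $\mathrm{RHS}_j^M\equiv0$, completing the proof. I expect the delicate point to be justifying the consistency of the quartic root selection, since well-balancing would fail if the two sides selected different branches of \eqref{for:quartic2}.
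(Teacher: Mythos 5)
Your proposal is correct and follows essentially the same route as the paper's proof: constancy of $\widetilde{\boldsymbol{v}}$ collapses the path integral \eqref{for:gpath} to the flux difference $\boldsymbol{f}(\boldsymbol{u}_{j+\frac12}^-)-\boldsymbol{f}(\boldsymbol{u}_{j+\frac12}^+)$, equality of the reconstructed states $\boldsymbol{u}_{j+\frac12}^{*,\pm}$ kills the dissipation in \eqref{for:LF-fluxmod}, and integration by parts plus the pointwise equilibrium identity $\boldsymbol{f}(\boldsymbol{u})_x+\mathcal{G}(\boldsymbol{u})\boldsymbol{u}_x=0$ annihilates what remains. Your only departure is that you justify $\boldsymbol{u}_{j+\frac12}^{*,+}=\boldsymbol{u}_{j+\frac12}^{*,-}$ via identical quartic coefficients and a consistent root-selection branch in \eqref{for:quartic2}, a point the paper asserts as immediate from \eqref{for:umod}; this extra care is a strength, not a discrepancy.
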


\begin{proof}
    Assume that the initial condition is an exact moving water equilibrium state, i.e.,
    \begin{equation}
        \widetilde{\boldsymbol{v}}(x)=\mathrm{constant},\quad
        \widetilde{\boldsymbol{\upsilon}}_{j+\frac{1}{2}}^-=\widetilde{\boldsymbol{\upsilon}}_{j+\frac{1}{2}}^+.
    \end{equation}
    Combining this with the linear segment path \eqref{for:linpath1}, we obtain
    \begin{equation}
        \int_{0}^{1}\mathcal{L}(\Phi_{j+\frac{1}{2}}(s))\frac{\partial}{\partial s}\widetilde{\boldsymbol{v}}(\Phi_{j+\frac{1}{2}}(s)) ds = 0.
    \end{equation}
    Thus, the path integral term \eqref{for:pathinter} reduces to the difference of fluxes
    \begin{equation}\label{for:gpathSimp}
    g_{\Phi,j+\frac12} = -\boldsymbol{f}(\boldsymbol{u}_{j+\frac{1}{2}}^{+})+\boldsymbol{f}(\boldsymbol{u}_{j+\frac{1}{2}}^{-}).
    \end{equation}
   By the definition in \eqref{for:umod}, we immediately have $\boldsymbol{u}_{j+\frac{1}{2}}^{*,+} = \boldsymbol{u}_{j+\frac{1}{2}}^{*,-}$, and the modified numerical flux from \eqref{for:LF-fluxmod} simplifies to
    \begin{equation}\label{for:LF-fluxmodSimp}
        \widehat{\boldsymbol{f}}_{j+\frac{1}{2}}^{\mathrm{mod}}=\frac{1}{2}\left(\boldsymbol{f}(\boldsymbol{u}_{j+\frac{1}{2}}^-)+\boldsymbol{f}(\boldsymbol{u}_{j+\frac{1}{2}}^+)\right).
    \end{equation}
    Substituting equations \eqref{for:gpathSimp} and \eqref{for:LF-fluxmodSimp} into the definition of $\mathrm{RHS}_{j}^{M}$ in \eqref{for:RHS2}, we derive
    \begin{align*}
    & \mathrm{RHS}_{j}^{M} \\
    &= \int_{\mathcal{I}_{j}}\boldsymbol{f}(\boldsymbol{u}(\boldsymbol{v}))\cdot\boldsymbol{\varphi}_{x}dx-\frac{1}{2}\left(\boldsymbol{f}(\boldsymbol{u}_{j+\frac{1}{2}}^-)+\boldsymbol{f}(\boldsymbol{u}_{j+\frac{1}{2}}^+)\right)\cdot\boldsymbol{\varphi}_{j+\frac{1}{2}}^-+\frac{1}{2}\left(\boldsymbol{f}(\boldsymbol{u}_{j-\frac{1}{2}}^-)+\boldsymbol{f}(\boldsymbol{u}_{j-\frac{1}{2}}^+)\right)\cdot\boldsymbol{\varphi}_{j-\frac{1}{2}}^+ \\
    & \quad -\int_{\mathcal{I}_{i}}\mathcal{G}(\boldsymbol{u}(\boldsymbol{v}))\boldsymbol{u}(\boldsymbol{v})_{x}\cdot\boldsymbol{\varphi}dx-\frac{1}{2}\boldsymbol{\varphi}_{j+\frac{1}{2}}^-\left(-\boldsymbol{f}(\boldsymbol{u}_{j+\frac{1}{2}}^+)+\boldsymbol{f}(\boldsymbol{u}_{j+\frac{1}{2}}^-)\right)-\frac{1}{2}\boldsymbol{\varphi}_{j-\frac{1}{2}}^+\left(-\boldsymbol{f}(\boldsymbol{u}_{j-\frac{1}{2}}^+)+\boldsymbol{f}(\boldsymbol{u}_{j-\frac{1}{2}}^-)\right) \\
    & = \int_{I_{j}}\boldsymbol{f}(\boldsymbol{u}(\boldsymbol{v}))\cdot\boldsymbol{\varphi}_{x}dx-\boldsymbol{f}(\boldsymbol{u}_{j+\frac{1}{2}}^{-})\cdot\boldsymbol{\varphi}_{j+\frac{1}{2}}^{-}+\boldsymbol{f}(\boldsymbol{u}_{j-\frac{1}{2}}^{+})\cdot\boldsymbol{\varphi}_{j-\frac{1}{2}}^{+}-\int_{\mathcal{I}_{j}}\mathcal{G}(\boldsymbol{u}(\boldsymbol{v}))\boldsymbol{u}(\boldsymbol{v})_{x}\cdot\boldsymbol{\varphi}dx \\
    & = -\int_{I_{j}}\partial_{x}\boldsymbol{f}(\boldsymbol{u}(\boldsymbol{v}))\cdot\boldsymbol{\varphi}dx-\int_{\mathcal{I}_{j}}\mathcal{G}(\boldsymbol{u}(\boldsymbol{v}))\boldsymbol{u}(\boldsymbol{v})_{x}\cdot\boldsymbol{\varphi}dx \\
    & = -\int_{I_{j}}\left(\partial_{x}\boldsymbol{f}(\boldsymbol{u}(\boldsymbol{v}))+\mathcal{G}(\boldsymbol{u}(\boldsymbol{v}))\boldsymbol{u}(\boldsymbol{v})_{x}\right)\cdot\boldsymbol{\varphi}dx=0.
    \end{align*}
    The final equality follows from the fact that the solution is in exact equilibrium. This completes the proof of the well-balanced property of the PCDG scheme for the moving water equilibrium state.

\end{proof}

\begin{theorem}
    The semi-discrete path-conservative discontinuous Galerkin method \eqref{for:semi-PCDG1}-\eqref{for:RHS1} is exact for still water equilibrium states \eqref{for:still}.
\end{theorem}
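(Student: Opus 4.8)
The plan is to mirror the structure of the proof of Theorem \ref{the:moving}, while exploiting two simplifications specific to the still water setting: there is no nonlinear change of variables, and the equilibrium flux is literally a constant vector. I would start by substituting into $\mathrm{RHS}_j^S$ of \eqref{for:RHS1} a discrete state sitting exactly at the still water equilibrium \eqref{for:still}, namely $\boldsymbol{w}\equiv(H,0,\dots,0)^T$ with $H=h+b$ constant on the whole domain and $m_a=m_i=0$. Since constants belong to $\mathcal{W}_h^k$, this state is represented exactly in $\mathcal{V}_h^k$, so the one-sided interface limits coincide, $\boldsymbol{w}_{j+1/2}^-=\boldsymbol{w}_{j+1/2}^+=(H,0,\dots,0)^T$; this exact representation is what makes interface jumps vanish later.

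First I would evaluate the flux \eqref{equ:SWLMEstill} at this state: with $m_a=m_i=0$ the momentum component collapses to $\tfrac{1}{2}gH^2$ and all other components vanish, so $\boldsymbol{f}(\boldsymbol{w})$ reduces to the constant vector $\boldsymbol{f}^\star=(0,\tfrac12 gH^2,0,\dots,0)^T$, independent of $x$ because $H$ is constant. Next I would check the interior non-conservative product $\mathcal{G}(\boldsymbol{w})\boldsymbol{w}_x$: its only potentially nonzero entries are the second, $-gb\,H_x$, and the moment rows $-\tfrac{m_a}{H-b}(m_i)_x$, and both vanish at equilibrium since $H_x=0$ and $m_a=0$. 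Hence the volume coupling integral in \eqref{for:RHS1} is zero. For the interface contributions, continuity of $\boldsymbol{w}$ kills the Lax--Friedrichs dissipation in \eqref{for:LF-flux} (the jump $\boldsymbol{w}^+-\boldsymbol{w}^-$ vanishes), so $\widehat{\boldsymbol{f}}_{j\pm1/2}=\boldsymbol{f}^\star$; and the explicit path-integral formulas \eqref{for:integralCal1}--\eqref{for:integralCal2} vanish because $g_{\Phi}^{[2]}$ carries the factor $H_{j+1/2}^+-H_{j+1/2}^-=0$ and each $g_{\Phi}^{[i+2]}$ carries $(m_i)_{j+1/2}^+-(m_i)_{j+1/2}^-=0$, even when $b$ jumps across the interface.

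Finally I would assemble the pieces. Because $\boldsymbol{f}^\star$ is constant, the fundamental theorem of calculus gives $\int_{\mathcal{I}_j}\boldsymbol{f}^\star\cdot\boldsymbol{\varphi}_x\,dx=\boldsymbol{f}^\star\cdot(\boldsymbol{\varphi}_{j+1/2}^--\boldsymbol{\varphi}_{j-1/2}^+)$, which cancels exactly against the surface flux terms $-\widehat{\boldsymbol{f}}_{j+1/2}\cdot\boldsymbol{\varphi}_{j+1/2}^-+\widehat{\boldsymbol{f}}_{j-1/2}\cdot\boldsymbol{\varphi}_{j-1/2}^+=-\boldsymbol{f}^\star\cdot\boldsymbol{\varphi}_{j+1/2}^-+\boldsymbol{f}^\star\cdot\boldsymbol{\varphi}_{j-1/2}^+$. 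With the coupling integral and the path integrals already shown to vanish, this yields $\mathrm{RHS}_j^S=0$ for every test function $\boldsymbol{\varphi}\in\mathcal{V}_h^k$, hence $\frac{d}{dt}\int_{\mathcal{I}_j}\boldsymbol{w}\cdot\boldsymbol{\varphi}\,dx=0$ and the equilibrium is preserved.

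I do not expect a genuine obstacle: the argument is a strict specialization of Theorem \ref{the:moving}, with the nonlinear energy variable replaced by the linear surface elevation $H$, so no Newton solve or implicit-function computation enters. The only step requiring care is the bookkeeping of the $-gb\,H_x$ term and the topography jump in $g_{\Phi}^{[2]}$: one must verify that it is the \emph{constancy of $H$} (not of $b$) that forces the cancellation. This is precisely where the $H=h+b$ reformulation pays off, since it removes the possibly discontinuous $b$ from the differentiated flux and relegates it to the path integral, where the factor $H^+-H^-=0$ neutralizes it.
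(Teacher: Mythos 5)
Your proposal is correct and follows essentially the same route as the paper's proof: both use the fact that the equilibrium state is exactly representable with zero interface jumps, so the Lax--Friedrichs flux reduces to the exact flux, the path-integral terms \eqref{for:integralCal1}--\eqref{for:integralCal2} vanish, and the remaining volume and surface terms cancel to give $\mathrm{RHS}_j^S=0$. The only cosmetic difference is that the paper concludes via integration by parts and the abstract steady-state identity $\boldsymbol{f}(\boldsymbol{w})_x+\mathcal{G}(\boldsymbol{w})\boldsymbol{w}_x=0$, whereas you verify the same cancellation explicitly by noting that $\boldsymbol{f}(\boldsymbol{w})$ is the constant vector $(0,\tfrac12 gH^2,0,\dots,0)^T$ and that the coupling term vanishes pointwise.
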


\begin{proof}
   Assume that the initial values are in an exact still water steady state, i.e.,
    \begin{equation}
        \boldsymbol{w}(x)=\mathrm{constant},\quad\boldsymbol{w}_{j+\frac{1}{2}}^-=\boldsymbol{w}_{j+\frac{1}{2}}^+.
    \end{equation}
   From the definitions of the numerical flux \eqref{for:LF-flux} and the path integral term \eqref{for:integralCal2}, it follows that
    \begin{equation}
        \widehat{\boldsymbol{f}}_{j+\frac{1}{2}}=\boldsymbol{f}(\boldsymbol{w}_{j+\frac{1}{2}}^-)=\boldsymbol{f}(\boldsymbol{w}_{j+\frac{1}{2}}^+),\quad
        g_{\Phi,j+\frac{1}{2}}=0.
    \end{equation}
   Hence, the $\mathrm{RHS}_{j}^S$ in \eqref{for:RHS1} becomes
    \begin{equation}
        \begin{aligned}
        \mathrm{RHS}_{j}^S & =\int_{\mathcal{I}_{j}}\boldsymbol{f}(\boldsymbol{w})\cdot\boldsymbol{\varphi}_{x}dx-\boldsymbol{f}(\boldsymbol{w}_{j+\frac{1}{2}}^-)\cdot\boldsymbol{\varphi}_{j+\frac{1}{2}}^-+\boldsymbol{f}(\boldsymbol{w}_{j-\frac{1}{2}}^+)\cdot\boldsymbol{\varphi}_{j-\frac{1}{2}}^+-\int_{\mathcal{I}_{j}}\mathcal{G}(\boldsymbol{w})\boldsymbol{w}_{x}\cdot\boldsymbol{\varphi}dx \\
         & =-\int_{\mathcal{I}_{j}}\boldsymbol{f}(\boldsymbol{w})_{x}\cdot\boldsymbol{\varphi}dx-\int_{\mathcal{I}_{j}}\mathcal{G}(\boldsymbol{w})\boldsymbol{w}_{x}\cdot\boldsymbol{\varphi}dx \\
         & =-\int_{\mathcal{I}_{i}}\left(\boldsymbol{f}(\boldsymbol{w})_{x}+\mathcal{G}(\boldsymbol{w})\boldsymbol{w}_{x}\right)\cdot\boldsymbol{\varphi}dx=0.
        \end{aligned}
    \end{equation}
The second equality results from integration by parts. The final equality holds because $\boldsymbol{w}$ is a steady state solution. This completes the proof of the well-balanced property of the scheme for the still water equilibrium.
\end{proof}
Since the still water case is technically a special case of the moving water equilibrium, it is also preserved in our scheme.
The fully discrete PCDG scheme also preserves the exact equilibrium states. This well-balanced property can be established by induction, and the proof is omitted here for brevity.
\begin{theorem}
    The fully discrete path-conservative discontinuous Galerkin method, consisting of the third-order SSP-RK time discretization \eqref{for:SSP-RK2} and either the spatial discretization \eqref{for:semi-PCDG1} or \eqref{for:semi-PCDG2}, exactly preserves the still and moving water equilibrium states, i.e., \eqref{for:still} and \eqref{for:moving}.
\end{theorem}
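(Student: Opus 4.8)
The plan is to argue by induction on the time level $n$, combining the two semi-discrete well-balanced results already proved---Theorem~\ref{the:moving} for the moving equilibrium and its still-water counterpart---with the observation that the SSP-RK3 update \eqref{for:SSP-RK2} is a convex combination of forward Euler steps. For the base case, the initial data is an exact equilibrium by hypothesis, so $\widetilde{\boldsymbol{v}}^{\,0}$ (respectively $\boldsymbol{w}^0$) is constant across every cell. In the inductive step I assume $\boldsymbol{v}^n$ encodes an exact moving-water equilibrium, i.e.\ $\widetilde{\boldsymbol{v}}^{\,n}$ is constant, and show that each of the three Runge-Kutta stages reproduces it, so that $\boldsymbol{v}^{n+1}=\boldsymbol{v}^n$.

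For the first stage, the equilibrium hypothesis together with Theorem~\ref{the:moving} gives $\mathrm{RHS}_j^M(\boldsymbol{v}^n,\boldsymbol{\varphi})=0$ for every $\boldsymbol{\varphi}\in\mathcal{V}_h^k$, so the defining relation for $\boldsymbol{v}^{(1)}$ collapses to
\begin{equation*}
    \int_{I_j}\boldsymbol{u}(\boldsymbol{v}^{(1)})\cdot\boldsymbol{\varphi}\,dx=\int_{I_j}\boldsymbol{u}(\boldsymbol{v}^{n})\cdot\boldsymbol{\varphi}\,dx,\qquad\forall\,\boldsymbol{\varphi}\in\mathcal{V}_h^k.
\end{equation*}
The choice $\boldsymbol{v}^{(1)}=\boldsymbol{v}^n$ satisfies this nonlinear moment-matching system, and since the local Newton solve targets exactly this system, I would conclude that $\boldsymbol{v}^{(1)}=\boldsymbol{v}^n$ is the value returned, so $\boldsymbol{v}^{(1)}$ is again an exact equilibrium. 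Feeding this back yields $\mathrm{RHS}_j^M(\boldsymbol{v}^{(1)},\boldsymbol{\varphi})=0$, whereupon the convex-combination structure of the second stage reduces it to $\int_{I_j}\boldsymbol{u}(\boldsymbol{v}^{(2)})\cdot\boldsymbol{\varphi}\,dx=\int_{I_j}\boldsymbol{u}(\boldsymbol{v}^{n})\cdot\boldsymbol{\varphi}\,dx$, giving $\boldsymbol{v}^{(2)}=\boldsymbol{v}^n$; the third stage repeats verbatim to deliver $\boldsymbol{v}^{n+1}=\boldsymbol{v}^n$. The still-water scheme is handled identically and is in fact simpler, because the map $\boldsymbol{w}\mapsto\boldsymbol{u}(\boldsymbol{w})$ merely replaces $h$ by $H=h+b$ and is therefore linear, so each stage equation is genuinely linear and the identities $\boldsymbol{w}^{(1)}=\boldsymbol{w}^{(2)}=\boldsymbol{w}^{n+1}=\boldsymbol{w}^n$ follow at once.

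I expect the main obstacle to be the well-posedness of the nonlinear stage solve in the moving-water case: one must verify that $\boldsymbol{v}^{(1)}=\boldsymbol{v}^n$ is genuinely the value selected by Newton's iteration, i.e.\ that the equilibrium is the unique admissible root of the moment-matching system near the initial guess. This hinges on the local invertibility of the transformation $\boldsymbol{v}\mapsto\boldsymbol{u}(\boldsymbol{v})$, equivalently on the quartic \eqref{for:quartic} possessing a single positive water height and on the Jacobian $\partial\boldsymbol{u}/\partial\boldsymbol{v}$ being nonsingular there, so that the implicit function theorem applies and Newton's method converges to the correct branch. A secondary point is to confirm that the TVB limiter applied between stages does not perturb the equilibrium; this is already guaranteed by the construction in the slope-limiter subsection, where the limiter is deactivated exactly when $\boldsymbol{v}^e$ is constant across a cell, as it is at any exact equilibrium. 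Assembling the stage-wise identities then closes the induction and establishes the claim for both equilibrium families simultaneously.
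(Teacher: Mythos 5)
Your proposal is correct and follows exactly the route the paper indicates but omits ``for brevity'': induction on the time level, using the semi-discrete well-balanced results (Theorem~\ref{the:moving} and its still-water counterpart) to kill $\mathrm{RHS}_j$ at each stage, together with the convex-combination structure of the SSP-RK3 update \eqref{for:SSP-RK2} and the equilibrium-deactivated limiter. Your additional observation---that in the moving-water case one must verify the local Newton solve for the nonlinear moment-matching system actually returns $\boldsymbol{v}^{(1)}=\boldsymbol{v}^n$ (local invertibility of $\boldsymbol{v}\mapsto\boldsymbol{u}(\boldsymbol{v})$ and selection of the correct root of the quartic \eqref{for:quartic})---is a genuine subtlety that the paper's omitted proof glosses over, and stating it explicitly strengthens rather than deviates from the argument.
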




\section{Numerical examples}
\label{se-nu}
In this section, we numerically verify the still water and moving water equilibrium preserving properties of the proposed path-conservative discontinuous Galerkin methods. The schemes are referred to as PCDG-still and PCDG-moving, respectively. All tests are conducted using $N=2$ moments, except for the last test case, which uses $N=8$ to demonstrate the capabilities of the code for large number of equations.

Unless otherwise specified, we employ piecewise quadratic ($P^2$) polynomial basis functions for spatial discretization. For all simulations, the CFL number is fixed at 0.05. The TVD slope limiter is used, except in the accuracy tests where the solution is smooth and no limiter is applied. The gravitational constant is set to $g =9.812 m/s^2$, unless stated otherwise. Free boundary conditions are imposed in all subsequent examples.
Notably, the tolerance in the Newton solver is set to $\mathrm{TOL} = 10^{-16}$ for Examples~\ref{exmp:still} and~\ref{exmp:moving}, to resolve the equilibrium states with high precision. For all other examples, a tolerance of $\mathrm{TOL} = 10^{-10}$ is used.

\begin{example}\label{exmp:accuracy}
 Accuracy test.
\end{example}
To test the accuracy of our proposed well-balanced PCDG schemes for a smooth solution, we choose the same bottom topography and initial conditions as in \cite{xing2014536}, with additional $N = 2$ non-zero moments
\begin{equation}\label{for:Ex1init}
\begin{aligned}
    & b(x)=\sin^2(\pi x),\quad h(x,0)=5+e^{\cos(2\pi x)},\quad hu_m(x,0)=\sin(\cos(2\pi x)), \\
    & \frac{\alpha_1}h(x,0)=0.25,\quad \frac{\alpha_2}h(x,0)=0.25.
\end{aligned}
\end{equation}

We perform simulations on the domain $[0, 1]$ with periodic boundary conditions and compute the solution up to $t = 0.01$ to avoid the formation of shocks. The numerical solution obtained by the third-order PCDG-still method using $nx=12,800$ grid cells is taken as the reference solution. We compute the  $L^1$ errors and corresponding convergence orders for the water height $h$, discharge $m=hu_m$ and the moment coefficients $\alpha_1$, $\alpha_2$.
The results for both the PCDG-still and PCDG-moving schemes are summarized in Tables~\ref{tab:accT1}-\ref{tab:accT2}, demonstrating third-order convergence consistent with the theoretical expectation for $P^2$ polynomial basis functions. This confirms that the proposed methods achieve the optimal convergence rate.

\begin{table}[!htb]
    \centering
    \caption{\label{tab:accT1}Example \ref{exmp:accuracy}. $L^1$ errors and numerical orders of accuracy with initial conditions \eqref{for:Ex1init}, using PCDG-still method.}
    \begin{tabular}{llllllllllll}
    \toprule
    \multirow{2}{*}{$nx$} & $h$           &       &  & $m$           &       &  & $\alpha_1$  &       &  & $\alpha_2$  &       \\ \cline{2-3} \cline{5-6} \cline{8-9} \cline{11-12}
                    & $L^1$ error & order &  & $L^1$ error & order &  & $L^1$ error & order &  & $L^1$ error & order \\
    \midrule
   20 & 1.3807e-04 &     / & & 6.3944e-04 &    / & & 4.7000e-04 &    / & & 4.7000e-04 &    / \\
   40 & 1.9044e-05 &  2.86 & & 7.5541e-05 & 3.08 & & 6.4906e-05 & 2.86 & & 6.4906e-05 & 2.86 \\
   80 & 2.4558e-06 &  2.96 & & 9.2914e-06 & 3.02 & & 8.3444e-06 & 2.96 & & 8.3444e-06 & 2.96 \\
   160 & 3.1551e-07 & 2.96 & & 1.1499e-06 & 3.01 & & 1.0539e-06 & 2.99 & & 1.0539e-06 & 2.99 \\
   320 & 4.1042e-08 & 2.94 & & 1.4268e-07 & 3.01 & & 1.3234e-07 & 2.99 & & 1.3234e-07 & 2.99 \\
   640 & 5.5751e-09 & 2.88 & & 1.7719e-08 & 3.01 & & 1.6897e-08 & 2.97 & & 1.6897e-08 & 2.97 \\
    \bottomrule
    \end{tabular}
\end{table}

\begin{table}[!htb]
    \centering
    \caption{\label{tab:accT2}Example \ref{exmp:accuracy}. $L^1$ errors and numerical orders of accuracy with initial conditions \eqref{for:Ex1init}, using PCDG-moving method.}
    \begin{tabular}{llllllllllll}
    \toprule
    \multirow{2}{*}{$nx$} & $h$           &       &  & $m $          &       &  & $\alpha_1$  &       &  & $\alpha_2$  &       \\ \cline{2-3} \cline{5-6} \cline{8-9} \cline{11-12}
                    & $L^1$ error & order &  & $L^1$ error & order &  & $L^1$ error & order &  & $L^1$ error & order \\
    \midrule
   20  &  1.3863e-04 &       / & &  1.8362e-04 &       / & &  4.7173e-04 &      /  & & 4.7173e-04 &       /  \\
   40  &  1.7567e-05 &  2.98 & &  4.0350e-05 &  2.19 & &  5.9376e-05 &  2.99 & &  5.9376e-05 &  2.99 \\
   80  &  2.2729e-06 &  2.95 & &  9.0823e-06 &  2.15 & &  7.6330e-06 &  2.96 & &  7.6330e-06 &  2.96 \\
   160 &  2.7076e-07 &  3.07 & &  1.0963e-06 &  3.05 & &  9.1128e-07 &  3.07 & &  9.1128e-07 &  3.07 \\
   320 &  3.3876e-08 &  3.00 & &  1.3054e-07 &  3.07 & &  1.1412e-07 &  3.00 & &  1.1412e-07 &  3.00 \\
   640 &  4.2378e-09 &  3.00 & &  1.6203e-08 &  3.01 & &  1.4277e-08 &  3.00 & &  1.4277e-08 &  3.00 \\
    \bottomrule
    \end{tabular}
\end{table}

\begin{example}\label{exmp:still}
Still water equilibrium test.
\end{example}

We use the following example to verify the well-balanced property of the proposed method for the still water equilibrium. The test is conducted on the spatial domain $[0,25]$, considering two different bottom topographies. The first case uses a continuous bottom topography defined by

\begin{equation}\label{for:bottomC}
    \left.b_0(x)=\left\{
    \begin{array}{ll}
    0.2-0.05(x-10)^2, & \text{if}\quad x\in[8,12], \\
    0, & \text{otherwise},
    \end{array}\right.\right.
\end{equation}
and the second case uses a discontinuous bottom topography
\begin{equation}\label{for:bottomD}
    b(x)=\left\{
    \begin{array}{ll}
    0.2,& \text{if}\quad x\in[8,12], \\
    0, & \text{otherwise}.
\end{array}\right.
\end{equation}
The initial conditions are given at equilibrium by
\begin{equation}
    h(x,0) = 2-b(x),\quad hu_m(x,0)=0,\quad\alpha_1(x,0) = 0,\quad\alpha_2(x,0) = 0.
\end{equation}

We use $nx=100$ uniform grid cells to compute the numerical solution at $t = 1$, employing a quadratic polynomial basis. The $L^1$ and $L^\infty$ errors of the water surface elevation $h+b$ and velocity $u$ are calculated. The results for the two proposed schemes, PCDG-still and PCDG-moving, over both continuous and discontinuous bottom topographies are reported in Tables~\ref{tab:stillerr1} and \ref{tab:stillerr2}, respectively. The results confirm that both schemes exactly preserve the still water equilibrium. The PCDG-moving method occasionally shows slightly larger numerical errors due to the nonlinear Newton iteration involved in solving for the equilibrium variables.

\begin{table}[!htbp]
\centering
\caption{\label{tab:stillerr1}Example \ref{exmp:still}. $L^1$ and $L^\infty$ errors with continuous bottom topography \eqref{for:bottomC}.}
\begin{tabular}{lllll}
\toprule
Scheme ($nx=100$) & $||\Delta (h+b)||_{L^1}$ & $||\Delta u||_{L^1}$ & $||\Delta (h+b)||_{L^\infty}$ & $||\Delta u||_{L^\infty}$ \\
\midrule
PCDG-still       & 1.8544e-15 & 3.3776e-15 & 1.1102e-15 & 1.4690e-15 \\
PCDG-moving      & 4.8248e-14 & 4.8229e-14 & 4.8849e-15 & 4.8849e-15 \\
\bottomrule
\end{tabular}
\end{table}

\begin{table}[!htbp]
\centering
\caption{\label{tab:stillerr2}Example \ref{exmp:still}. $L^1$ and $L^\infty$ errors with discontinuous bottom topography \eqref{for:bottomD}.}
\begin{tabular}{lllll}
\toprule
Scheme ($nx=100$) & $||\Delta (h+b)||_{L^1}$ & $||\Delta u||_{L^1}$ & $||\Delta (h+b)||_{L^\infty}$ & $||\Delta u||_{L^\infty}$ \\
\midrule
PCDG-still       & 5.9646e-15 & 3.8598e-14 & 2.6645e-15 & 8.2258e-15 \\
PCDG-moving      & 3.8805e-14 & 3.8805e-14 & 4.8850e-15 & 4.8850e-15 \\
\bottomrule
\end{tabular}
\end{table}

\begin{example}\label{exmp:still_pert}
Small perturbation of the still water equilibrium.
\end{example}

The following example is used to demonstrate the capability of the proposed schemes in accurately capturing the propagation of small perturbations around the still water equilibrium steady state. We adopt the same test case as in \cite{xing2006new}. The computational domain is $[0,2]$, and the bottom topography and initial conditions are given by
\begin{equation}
    b(x)=
    \left\{
    \begin{array}{ll}
    0.25(\cos(10\pi(x-1.5))+1),& \text{if~}1.4\leq x\leq1.6, \\
    0,& \text{otherwise},
    \end{array}\right.
\end{equation}
\begin{equation}
    hu_m(x,0)=0, \quad\alpha_1(x,0)=\alpha_2(x,0)=0, \quad h(x,0)=\left\{
    \begin{array}{ll}
    1-b(x)+\epsilon,& \text{if~}1.1\leq x\leq1.2, \\
    1-b(x),& \text{otherwise.}
    \end{array}\right.
\end{equation}
where $\epsilon$ denotes the perturbation amplitude.

We consider two cases with $\epsilon=0.2$ (large pulse) and $\epsilon=0.001$ (small pulse), and evaluate the solution at $t=0.2$, using transmissive boundary conditions. The simulations are conducted on a mesh with $nx=200$ uniform cells, and a finer grid with $nx=3000$ cells is used to generate the reference solution. Figures~\ref{fig:perturStillBig} and \ref{fig:perturStillSmall} show the surface elevation $h+b$ (left) and the discharge $hu_m$ (right) for both the large and small pulse cases, respectively. As expected, the proposed well-balanced schemes accurately capture small perturbations of the still water equilibrium without generating spurious oscillations near discontinuities, confirming their robustness and fidelity.

\begin{figure}[htb]
\centering
\includegraphics[width=1\linewidth]{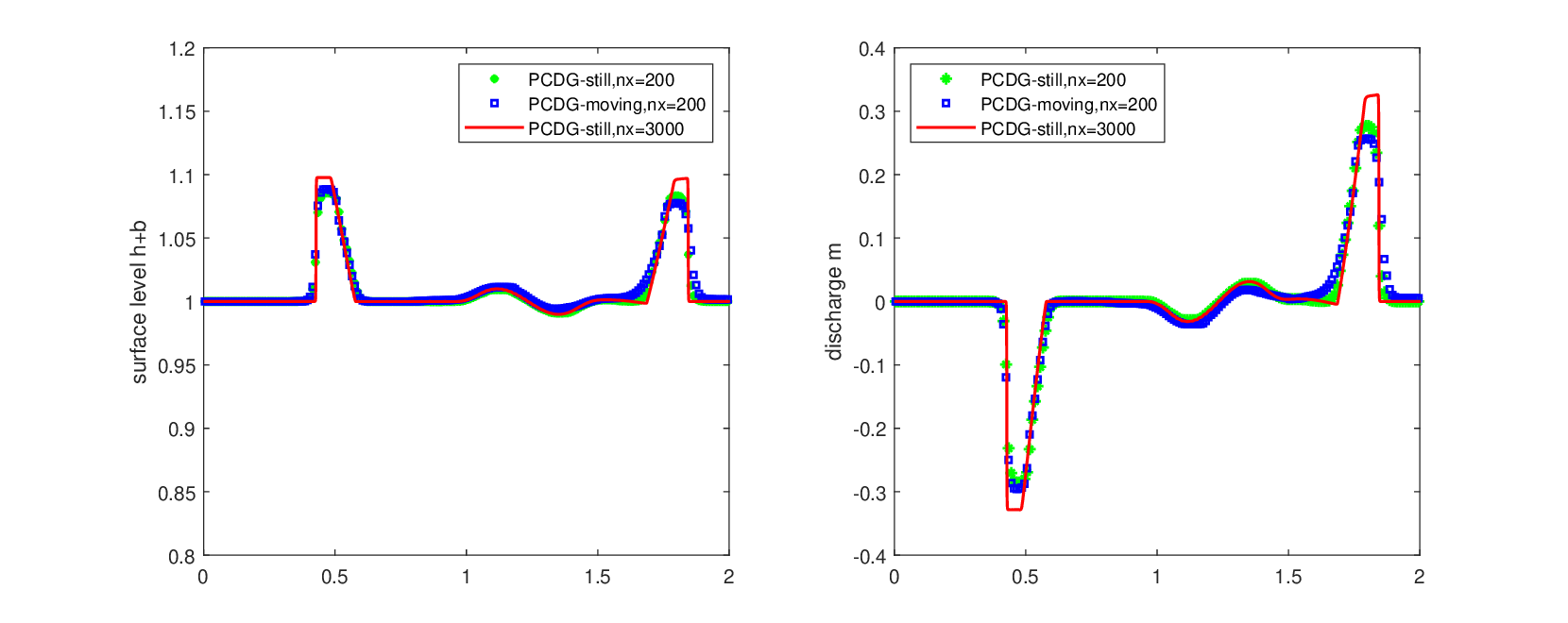}
\caption{\label{fig:perturStillBig} Example \ref{exmp:still_pert}. Small perturbation of still water equilibrium for the big pulse $\epsilon=0.2$, using $nx=200$ and $nx=3000$ grid cells at time $t=0.2$.}
\end{figure}

\begin{figure}[htb]
\centering
\includegraphics[width=1\linewidth]{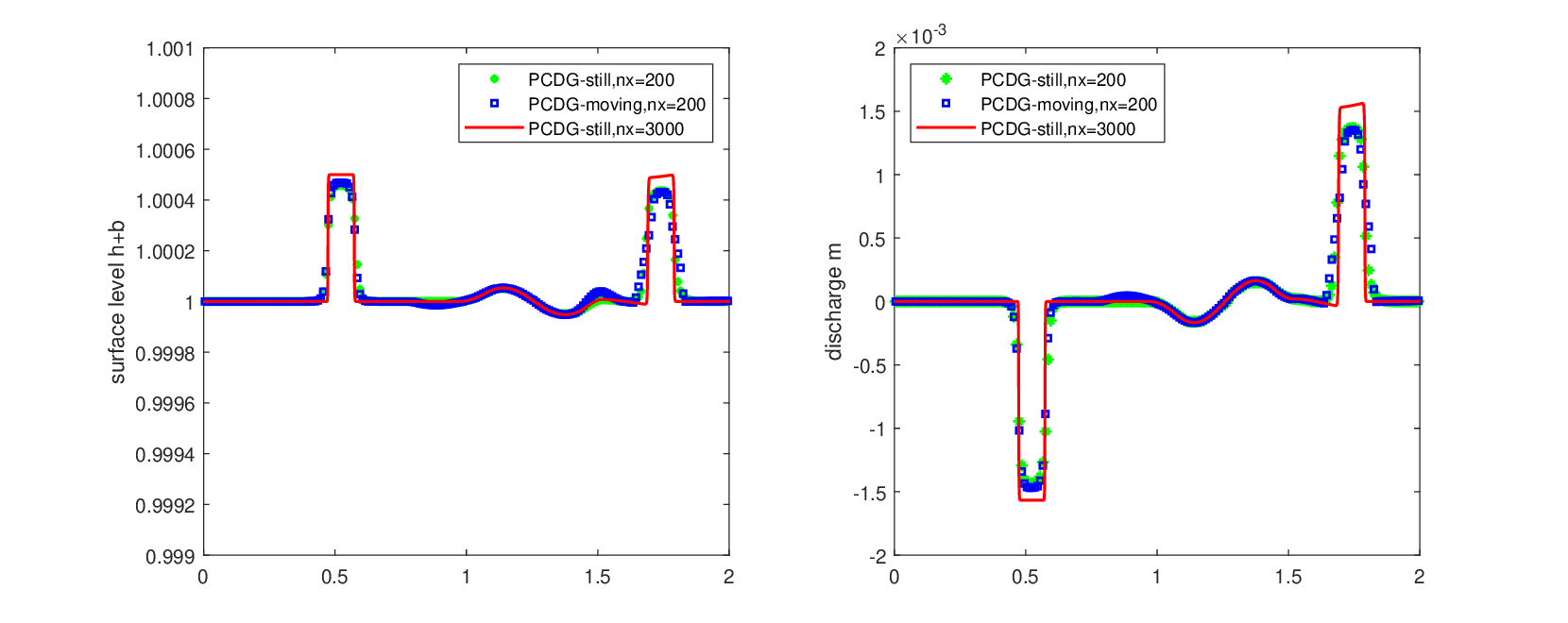}
\caption{\label{fig:perturStillSmall}Example \ref{exmp:still_pert}. Small perturbation of still water equilibrium for the small pulse $\epsilon=0.001$, using $nx=200$ and $nx=3000$ grid cells at time $t=0.2$.}
\end{figure}

\begin{example}\label{exmp:moving}
Moving water equilibrium test.
\end{example}

The following examples are designed to demonstrate the well-balanced property of the proposed scheme for the moving water equilibrium. Similar to the setup in Example~\ref{exmp:still}, we consider the same continuous and discontinuous bottom topographies defined in \eqref{for:bottomC} and \eqref{for:bottomD}, respectively.

We examine three representative steady-state flow regimes: subcritical, supercritical, and transcritical. These classical test cases, as presented in \cite{xing2014536, Cheng2016}, are commonly used to evaluate the performance of numerical methods in preserving moving water equilibria.
The initial conditions for all three cases are detailed below. To specifically test the well-balanced property for the shallow water linearized moment equations, we adapt the original setups by introducing $N = 2$ non-zero moments in the first two examples. These adapted cases are used for the first time to validate the well-balanced property of a discontinuous Galerkin scheme for the SWLME with non-zero moments.

\subsubsection*{(a) Subcritical flow}
The initial conditions are given by
\begin{equation}
    E_0 = 22.09805,\quad (hu_m)_0 = 4.42,\quad (\frac{\alpha_1}{h})_0 = 0.1, \quad (\frac{\alpha_2}{h})_0 = -0.1.
\end{equation}

\subsubsection*{(b) Supercritical flow}
The initial conditions are given by
\begin{equation}
    E_0 = 91.6320,\quad (hu_m)_0 = 24,\quad (\frac{\alpha_1}{h})_0 = 0.1, \quad (\frac{\alpha_2}{h})_0 = -0.1.
\end{equation}

\subsubsection*{(c) Transcritical flow}
The initial conditions are given by
\begin{equation}
    E_0 = 11.0907140397782,\quad (hu_m)_0 = 1.53,\quad (\frac{\alpha_1}{h})_0 = (\frac{\alpha_2}{h})_0 = 0.
\end{equation}
To find the correct roots of this quartic equation, we give the initial type of the flow
\begin{equation}
    \left\{
    \begin{array}{ll}
    \text{subcritical flow},& \text{if}\quad x<10, \\
    \text{supercritical flow},& \text{if}\quad x>10,
    \end{array}\right.
\end{equation}
for the continuous bottom topography \eqref{for:bottomC}, and
\begin{equation}
    \left\{
    \begin{array}{ll}
    \text{subcritical flow},& \text{if}\quad x<8, \\
    \text{sonic flow},& \text{if}\quad 8\leq x\leq12, \\
    \text{supercritical flow},& \text{if}\quad x>12,
    \end{array}\right.
\end{equation}
for the discontinuous bottom topography \eqref{for:bottomD}. In this way, the initial value can be determined analytically according to the flow type.

We compute the numerical solution at $t = 1$ using $nx=100$ uniform grid cells and evaluate the $L^1$ and $L^\infty$ errors of the equilibrium variables: the energy $E$, the discharge $hu_m$ and the moment coefficients $\frac{\alpha_1}{h}$ and $\frac{\alpha_2}{h}$. The computational results of three flow types over two different bottom topographies are presented in Tables \eqref{tab:movingerr1c}-\eqref{tab:movingerr3d}. The results show that the numerical errors produced by the proposed PCDG-moving scheme are on the order of machine precision in all cases, demonstrating excellent preservation of the moving water equilibrium regardless of the bottom topography. In contrast, the PCDG-still scheme consistently produces larger errors, as expected, since it is not designed to preserve moving water equilibria. For the transcritical case, note that the moment variables $\frac{\alpha_1}{h}$ and $\frac{\alpha_2}{h}$ are not present in the initial data. As a result, their errors remain exactly zero throughout the simulation.

\begin{table}[!htb]
\centering
\caption{\label{tab:movingerr1c}Example \ref{exmp:moving}. Subcritical flow: $L^1$ and $L^\infty$ errors with continuous bottom topography.}
\begin{tabular}{lllll}
\toprule
PCDG-moving & $E$ & $hu_m$ & $\frac{\alpha_1}{h}$ & $\frac{\alpha_2}{h}$ \\
\midrule
$L_1$      & 3.0291e-12 & 4.7727e-13 & 1.6772e-13 & 3.5504e-13 \\
$L_\infty$ & 3.3751e-13 & 5.3735e-14 & 2.7756e-14 & 3.8858e-14 \\
\midrule
PCDG-still & $E$ & $hu_m$ & $\frac{\alpha_1}{h}$ & $\frac{\alpha_2}{h}$ \\
\midrule
$L_1$      & 1.3705e-08 & 4.2868e-08 & 2.0582e-08 & 2.0582e-08 \\
$L_\infty$ & 1.3287e-08 & 3.6694e-08 & 2.0283e-08 & 2.0283e-08 \\
\bottomrule
\end{tabular}
\end{table}

\begin{table}[!htb]
\centering
\caption{\label{tab:movingerr1d}Example \ref{exmp:moving}. Subcritical flow: $L^1$ and $L^\infty$ errors with discontinuous bottom topography.}
\begin{tabular}{lllll}
\toprule
PCDG-moving & $E$ & $hu_m$ & $\frac{\alpha_1}{h}$ & $\frac{\alpha_2}{h}$ \\
\midrule
 $L_1$      & 8.9823e-12 & 4.0705e-13 & 1.1185e-13 & 4.5471e-13 \\
 $L_\infty$ & 4.4764e-13 & 6.4837e-14 & 1.2657e-14 & 3.4750e-14 \\
\midrule
PCDG-still & $E$ & $hu_m$ & $\frac{\alpha_1}{h}$ & $\frac{\alpha_2}{h}$ \\
\midrule
 $L_1$      & 9.4936e-05 & 3.5906e-05 & 1.3908e-06 & 1.3908e-06 \\
 $L_\infty$ & 1.0644e-04 & 2.5534e-05 & 1.4802e-06 & 1.4802e-06 \\
\bottomrule
\end{tabular}
\end{table}

\begin{table}[!htb]
\centering
\caption{\label{tab:movingerr2c}Example \ref{exmp:moving}. Supercritical flow: $L^1$ and $L^\infty$ errors with continuous bottom topography.}
\begin{tabular}{lllll}
\toprule
PCDG-moving & $E$ & $hu_m$ & $\frac{\alpha_1}{h}$ & $\frac{\alpha_2}{h}$ \\
\midrule
  $L_1$       &  3.5499e-11 & 5.1394e-12 & 3.4511e-13 & 2.5034e-13  \\
  $L_\infty$  &  2.7569e-12 & 9.2015e-13 & 3.2863e-14 & 2.7978e-14  \\
\midrule
PCDG-still & $E$ & $hu_m$ & $\frac{\alpha_1}{h}$ & $\frac{\alpha_2}{h}$ \\
\midrule
  $L_1$       &  1.7394e-07 & 1.9073e-08 & 6.4261e-10 & 6.4261e-10 \\
  $L_\infty$  &  1.8614e-07 & 1.6443e-08 & 6.9139e-10 & 6.9139e-10 \\
\bottomrule
\end{tabular}
\end{table}

\begin{table}[!htb]
\centering
\caption{\label{tab:movingerr2d}Example \ref{exmp:moving}. Supercritical flow: $L^1$ and $L^\infty$ errors with discontinuous bottom topography.}
\begin{tabular}{lllll}
\toprule
PCDG-moving & $E$ & $hu_m$ & $\frac{\alpha_1}{h}$ & $\frac{\alpha_2}{h}$ \\
\midrule
  $L_1$      &  7.1135e-11 & 3.4192e-12 & 3.7787e-13 & 3.5608e-13 \\
  $L_\infty$ &  3.9080e-12 & 5.0804e-13 & 2.6201e-14 & 3.7859e-14 \\
\midrule
PCDG-still & $E$ & $hu_m$ & $\frac{\alpha_1}{h}$ & $\frac{\alpha_2}{h}$ \\
\midrule
  $L_1$      &  2.9191e-05 & 1.3231e-05 & 1.1083e-07 & 1.1083e-07 \\
  $L_\infty$ &  2.9417e-05 & 8.1318e-06 & 1.1486e-07 & 1.1486e-07 \\
\bottomrule
\end{tabular}
\end{table}

\begin{table}[!htb]
\centering
\caption{\label{tab:movingerr3c}Example \ref{exmp:moving}. Transcritical flow: $L^1$ and $L^\infty$ errors with continuous bottom topography.}
\begin{tabular}{lllll}
\toprule
PCDG-moving & $E$ & $hu_m$ & $\frac{\alpha_1}{h}$ & $\frac{\alpha_2}{h}$ \\
\midrule
$L_1$      & 1.8370e-12 & 4.9100e-13 & 0 & 0 \\
$L_\infty$ & 6.2794e-12 & 1.5514e-12 & 0 & 0 \\
\midrule
PCDG-still & $E$ & $hu_m$ & $\frac{\alpha_1}{h}$ & $\frac{\alpha_2}{h}$ \\
\midrule
 $L_1$      & 6.3949e-07	& 5.3418e-08 & 0 & 0 \\
 $L_\infty$ & 6.9161e-07	& 4.4539e-08 & 0 & 0 \\
\bottomrule
\end{tabular}
\end{table}

\begin{table}[!htb]
\centering
\caption{\label{tab:movingerr3d}Example \ref{exmp:moving}. Transcritical flow: $L^1$ and $L^\infty$ errors with discontinuous bottom topography.}
\begin{tabular}{lllll}
\toprule
PCDG-moving & $E$ & $hu_m$ & $\frac{\alpha_1}{h}$ & $\frac{\alpha_2}{h}$ \\
\midrule
$L_1$      & 6.5725e-14 & 3.7843e-14 & 0 & 0 \\
$L_\infty$ & 3.5527e-15 & 2.8355e-15 & 0 & 0 \\
\midrule
PCDG-still & $E$ & $hu_m$ & $\frac{\alpha_1}{h}$ & $\frac{\alpha_2}{h}$ \\
\midrule
 $L_1$      & 1.0799e-04 & 5.4681e-05 & 0 & 0    \\
 $L_\infty$ & 1.5798e-04 & 4.0166e-05 & 0 & 0 \\
\bottomrule
\end{tabular}
\end{table}

\begin{example}\label{exmp:moving_pert}
Small perturbation of the moving water equilibrium.
\end{example}
We consider a similar test case from \cite{noelle200729} to evaluate the ability of the proposed PCDG-moving scheme to capture the propagation of small perturbations around a moving water equilibrium--a property that cannot be preserved by schemes lacking well-balanced treatment for moving water states. Using the same initial condition as in Example~\ref{exmp:moving}, we examine two scenarios: one with the continuous bottom topography and one with the discontinuous bottom topography, as defined in \eqref{for:bottomC} and \eqref{for:bottomD}, respectively. In the third transcritical flow case, all moment coefficients are zero, reducing the system to the standard shallow water equations (SWE). Since this case has been studied in detail in \cite{zhang2023}, we omit the results here for brevity.
A small perturbation of magnitude $0.01$ is applied to the water height within the interval $[5.75, 6.25]$. Numerical simulations are performed up to $t = 1.5$ for the subcritical flow case and up to $t = 1.0$ for the supercritical flow case. The solution computed with the PCDG-moving scheme and $nx=1000$ uniform grid cells is used as the reference solution.

Figures~\ref{fig:SubCompareC}-\ref{fig:SupCompareD} display the numerical results obtained with $nx=200$ uniform cells for the surface elevation  $h+b$, energy $E$, and moment coefficients $\alpha_1$ and $\alpha_2$, along with the corresponding reference solutions. The results clearly demonstrate that the proposed PCDG-moving scheme accurately captures the evolution of small perturbations in the moving water equilibrium, including their effects on energy and moment dynamics. Moreover, the slight oscillations observed in the moment coefficients, introduced by the perturbation, are also well-resolved and can be further refined by mesh refinement.

\begin{figure}
    \centering
    \includegraphics[width=1\linewidth]{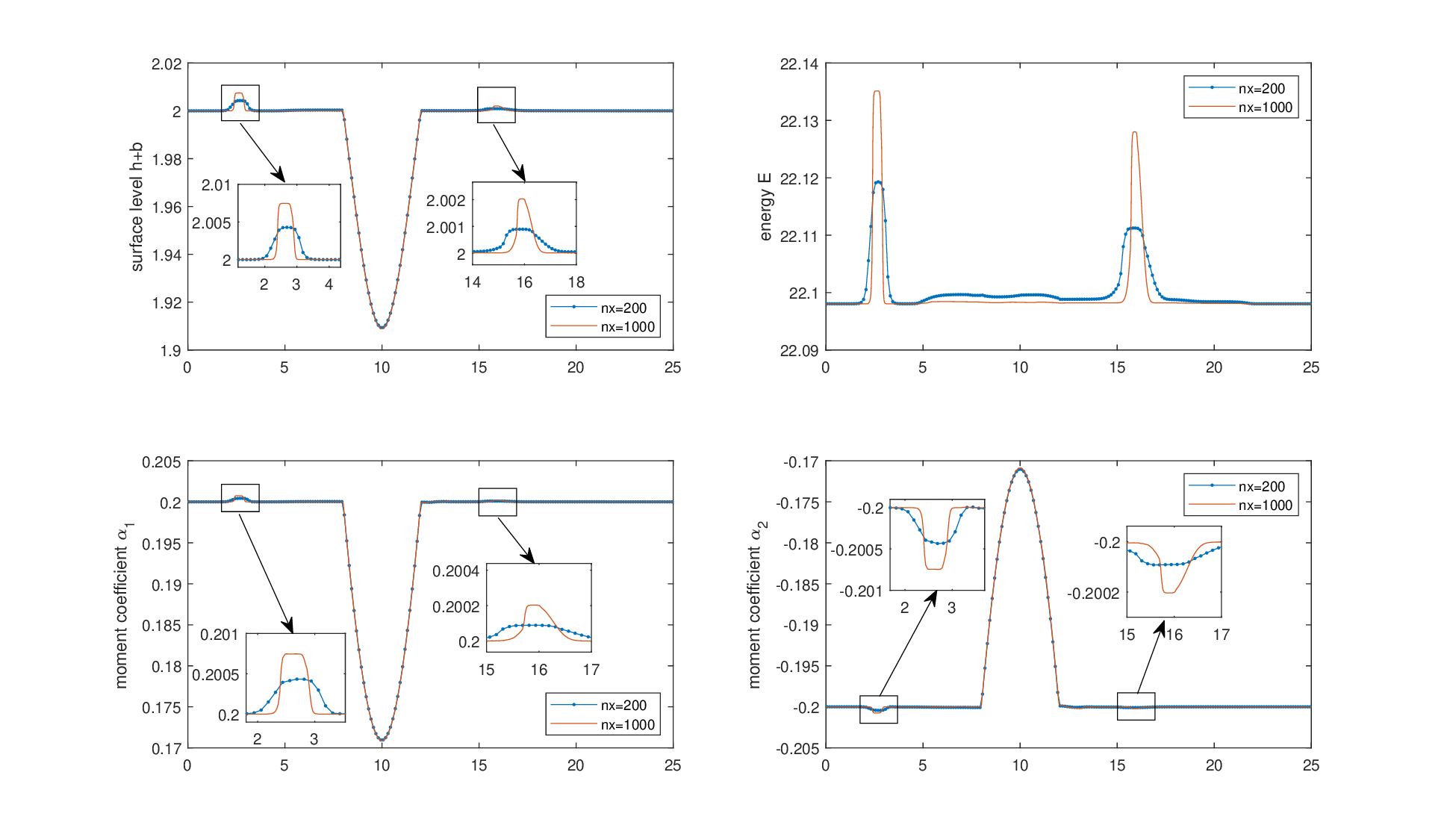}
    \caption{Example \ref{exmp:moving_pert}. Small perturbation of moving water equilibrium for subcritical flow with continuous bottom, using $nx=200$ and $nx=1000$ grid cells.}
    \label{fig:SubCompareC}
\end{figure}

\begin{figure}
    \centering
    \includegraphics[width=1\linewidth]{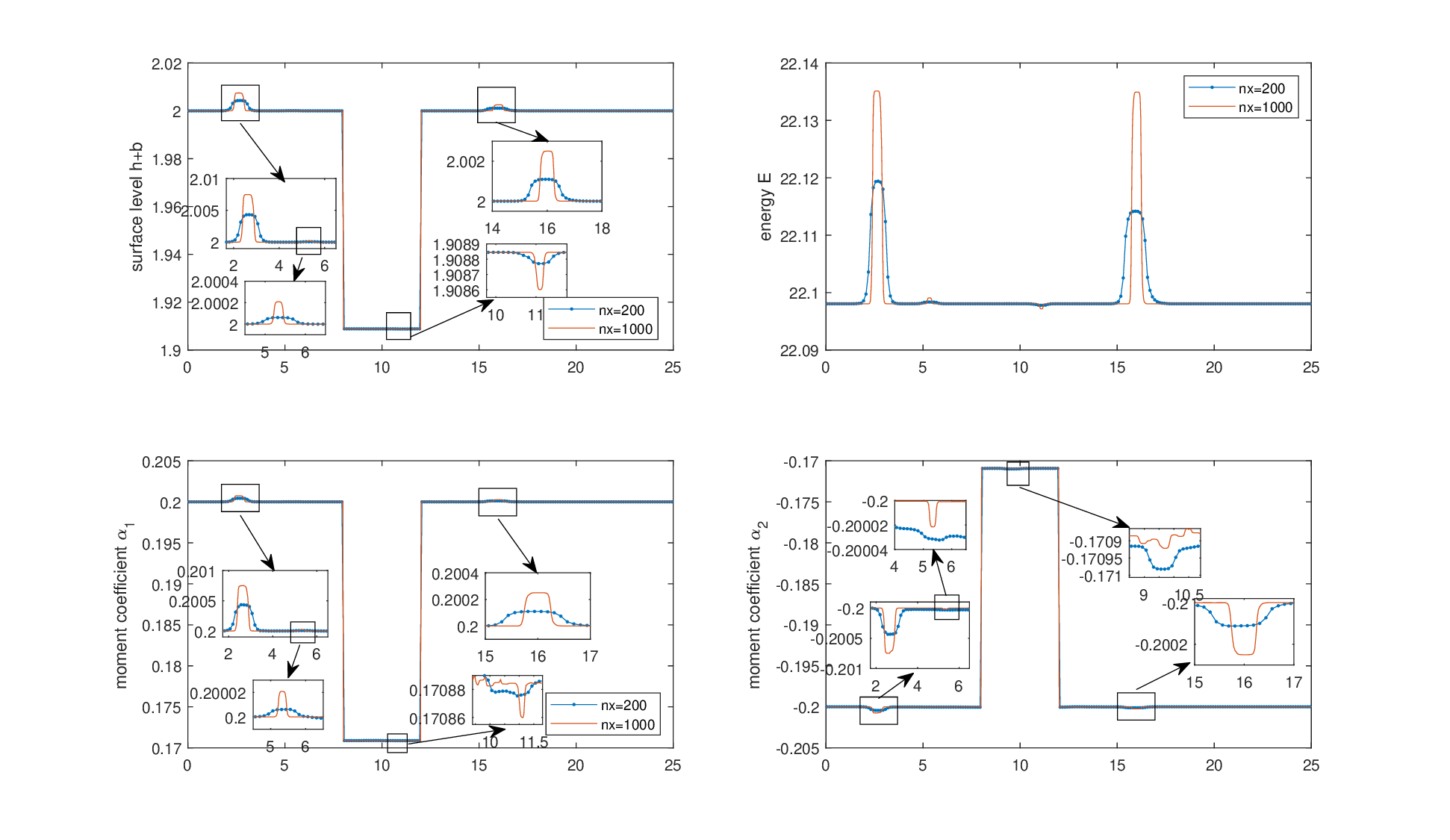}
    \caption{Example \ref{exmp:moving_pert}. Small perturbation of moving water equilibrium for subcritical flow with discontinuous bottom, using $nx=200$ and $nx=1000$ grid cells.}
    \label{fig:SubCompareD}
\end{figure}

\begin{figure}
    \centering
    \includegraphics[width=1\linewidth]{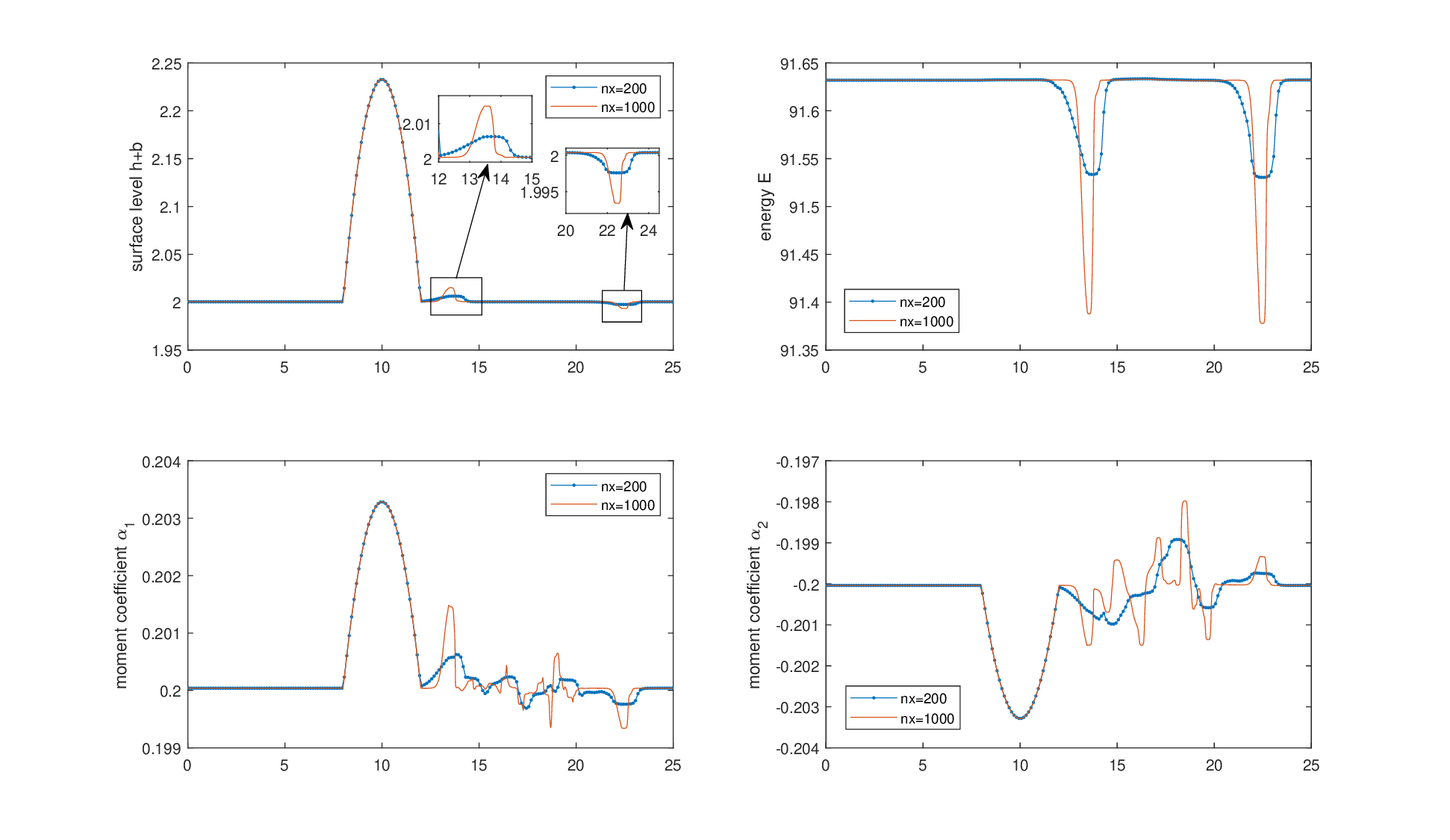}
    \caption{Example \ref{exmp:moving_pert}. Small perturbation of moving water equilibrium for supercritical flow with continuous bottom, using $nx=200$ and $nx=1000$ grid cells.}
    \label{fig:SupCompareC}
\end{figure}

\begin{figure}
    \centering
    \includegraphics[width=1\linewidth]{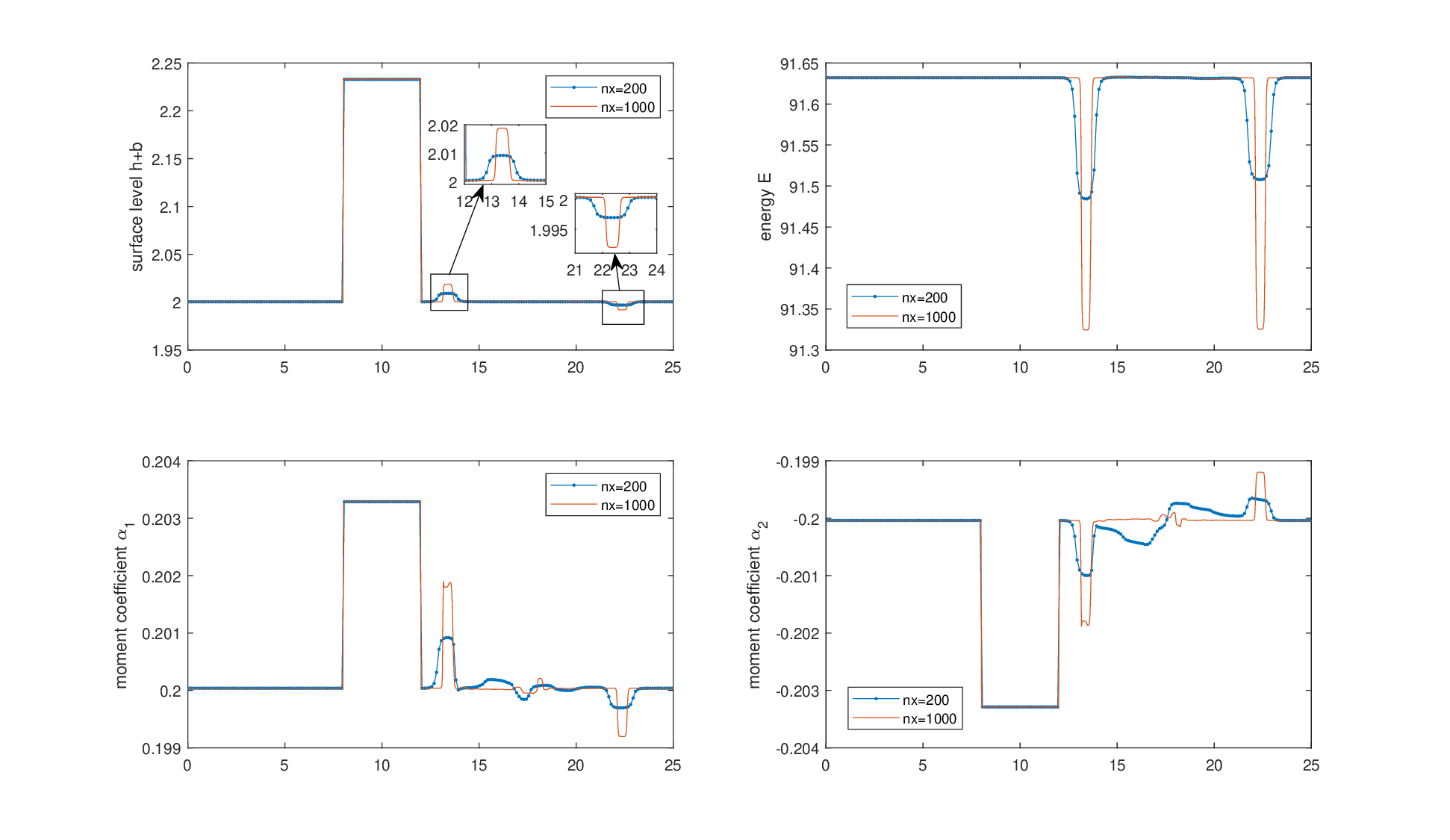}
    \caption{Example \ref{exmp:moving_pert}. Small perturbation of moving water equilibrium for supercritical flow with discontinuous bottom, using $nx=200$ and $nx=1000$ grid cells.}
    \label{fig:SupCompareD}
\end{figure}

\begin{example}\label{exmp:dam}
The standard dam-break test.
\end{example}

We implement a standard dam-break test to validate the correctness of the scheme. Following the example in \cite{koellermeier2022127166}, we consider the initial conditions for the dam-break problem as
\begin{equation}
    h(x,0)=\left\{
    \begin{array}{ll}
    3,& \text{if~}x\geq 0, \\
    1,& \text{if~}x < 0.
    \end{array}\right.
\end{equation}
\begin{equation}
    u_m(x,0)=0.25,\quad \alpha_1(x,0)=-0.25,\quad \alpha_2(x,0)=0.25.
\end{equation}
The water height $h$, average velocity $u_m$, and moment coefficients $\alpha_1,\alpha_2$ are computed at time $t=0.04$. The numerical results obtained using the PCDG-still and PCDG-moving schemes with $nx=400$ uniform grid cells are presented in Figure \ref{fig:DBcompare}. From the figure, it is evident that the PCDG-still and PCDG-moving schemes produce nearly identical results for the height $h$ and average velocity $u_m$. However, for the moment coefficients $\alpha_1$ and $\alpha_2$,  the PCDG-moving scheme demonstrates improved accuracy in capturing the rarefaction wave.
Due to the use of different equilibrium variables when evaluating the non-conservative products, the two schemes produce slightly different moment coefficients.
Although the initial condition is not a steady state or a perturbation around one, the PCDG-moving scheme still provides noticeably better results.
\begin{figure}
\centering
\includegraphics[width=1\linewidth]{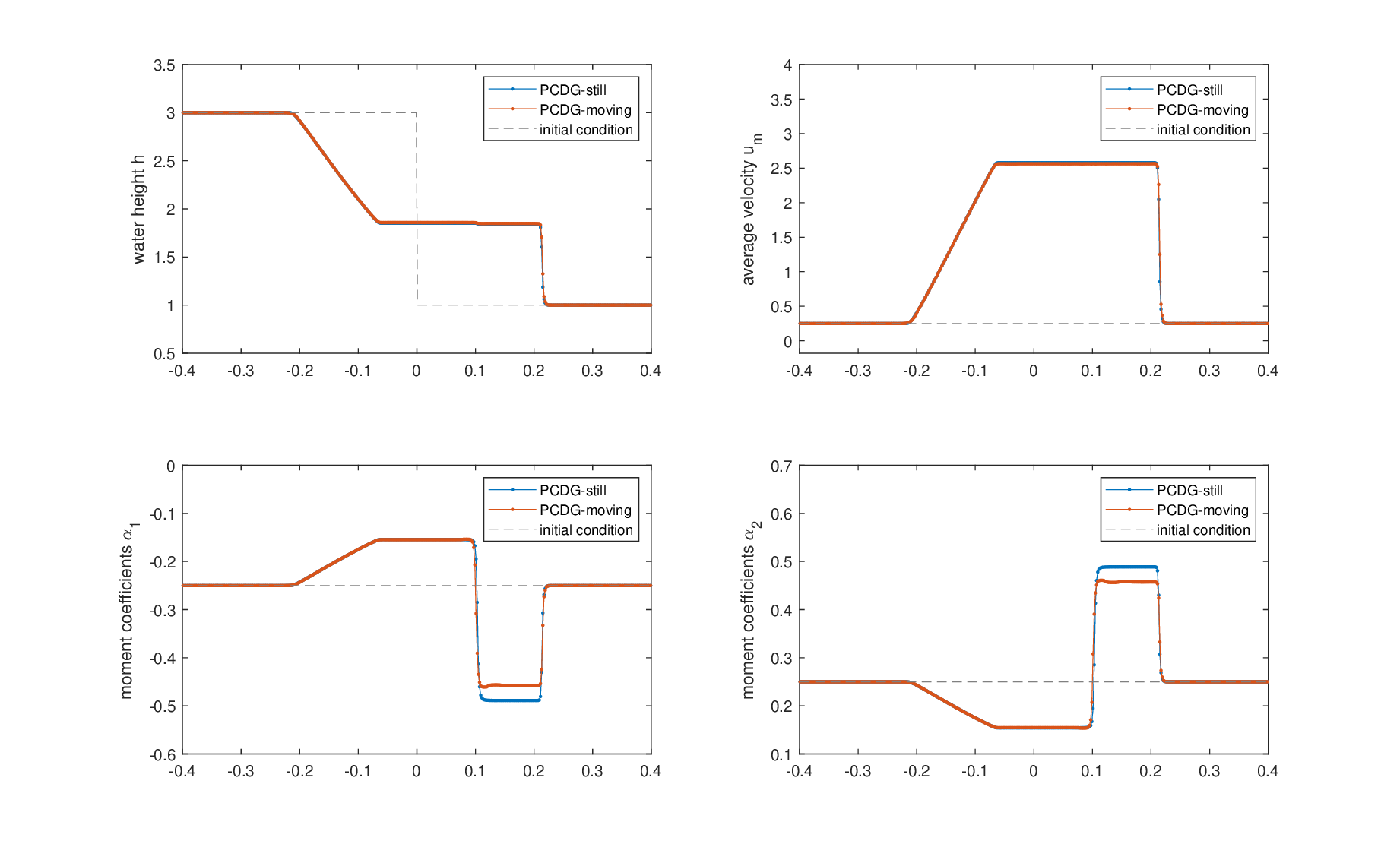}
\caption{\label{fig:DBcompare}Example \ref{exmp:dam}. The standard dam-break test, using PCDG-still and PCDG-moving method with $nx=400$ uniform cells at time $t=0.04$.}
\end{figure}

\begin{example}\label{exmp:dam_disc}
The dam-break test over a discontinuous bottom topography.
\end{example}
We also implement a dam-break test over a discontinuous bottom topography, similar to the test case in \cite{xing2006new}. This involves rapidly varying flow over a rectangular bump within the domain $[0,1500]$, which is given by
\begin{equation}
    \left.b(x)=\left\{
    \begin{array}{ll}
    8,& \text{if }562.5\leq x\leq937.5, \\
    0,& \text{otherwise}.
    \end{array}\right.\right.
\end{equation}
To examine the differences between the SWLME and the SWE equations, we note that the SWE typically uses a discharge $hu_m = 0$ as the initial condition in this test case. In contrast, we set non-zero initial values for the velocity $u_m$ and the moment coefficients $\alpha_1, \alpha_2$. The initial conditions are specified as follows:
\begin{equation}
    \left.u_m(x,0)=0.25,\quad \alpha_1=-0.25,\quad \alpha_2 = 0.25, \quad
    h(x,0)=\left\{
    \begin{array}{ll}
    20-b(x),& \text{if }x\leq750, \\
    15-b(x),& \text{otherwise}.
    \end{array}\right.\right.
\end{equation}

We first compute the surface level $ h + b $, average velocity $ u_m $, and moment coefficients $ \alpha_1 $, $ \alpha_2 $ at $ t = 15 $ and $ t = 60 $ using $nx=1000$ uniform grid cells. The numerical results are shown in Figures \ref{fig:DBbumpcompare15v2} and \ref{fig:DBbumpcompare60v2}. At $ t = 15 $, both the PCDG-still and PCDG-moving methods yield similar results, effectively simulating the Riemann problem with a discontinuous bottom topography. However, at $ t = 60 $, differences arise between the two methods. The PCDG-still method, which performs less accurately when the initial condition $ hu_m \neq 0 $, accumulates larger errors in the long-term simulation compared to the PCDG-moving method. In contrast, the PCDG-moving method continues to accurately capture the long-term evolution of the Riemann problem.
\begin{figure}
\centering
\includegraphics[width=1\linewidth]{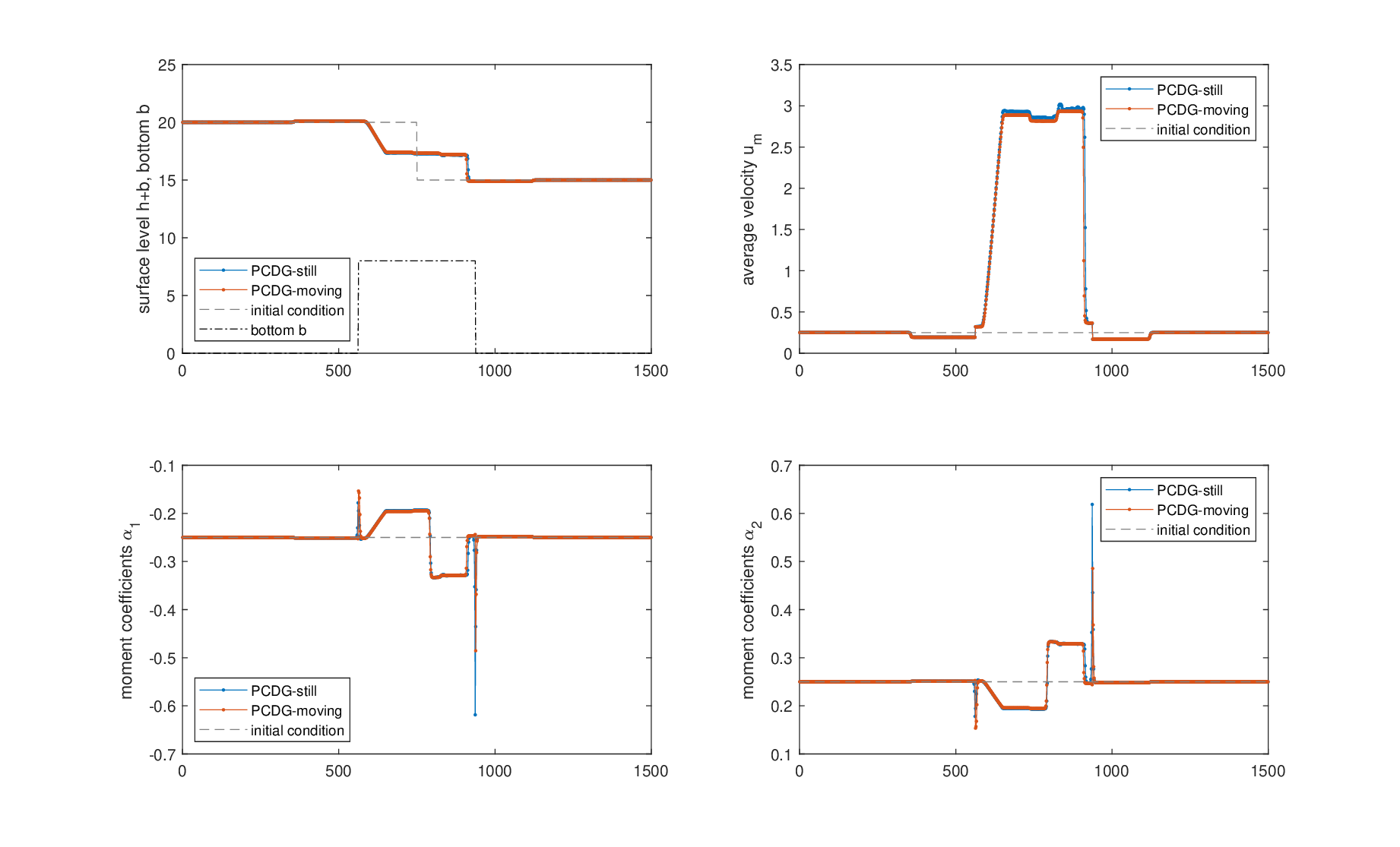}
\caption{\label{fig:DBbumpcompare15v2}Example \ref{exmp:dam_disc}. The dam-break test over a rectangular bump, using PCDG-still and PCDG-moving method with $nx=1000$ uniform cells at time $t = 15$.}
\end{figure}

\begin{figure}
\centering
\includegraphics[width=1\linewidth]{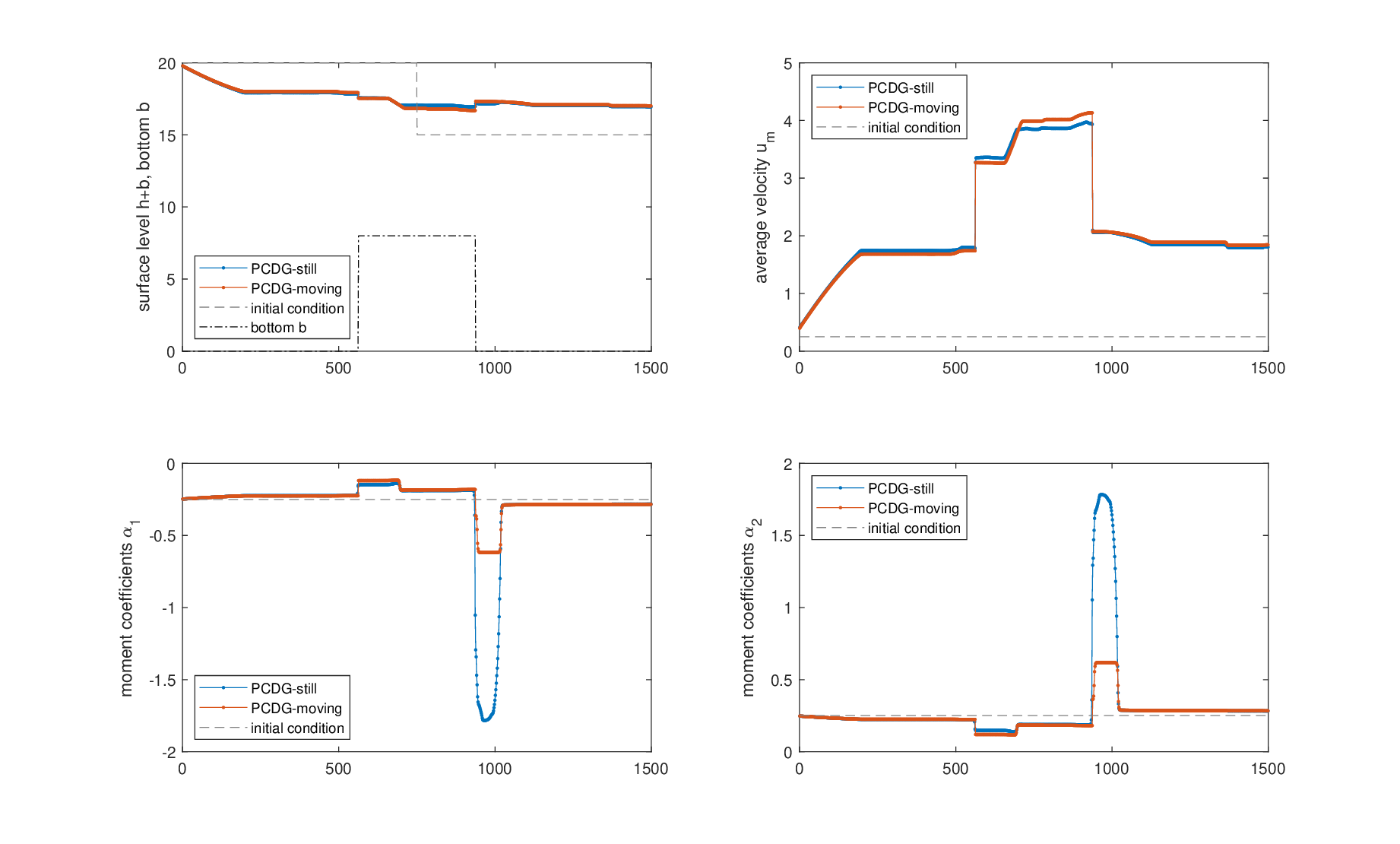}
\caption{\label{fig:DBbumpcompare60v2}Example \ref{exmp:dam_disc}. The dam-break test over a rectangular bump, using PCDG-still and PCDG-moving method with $nx=1000$ uniform cells at time $t = 60$.}
\end{figure}

Next, we compute the numerical solutions at $ t = 15 $ and $ t = 60 $ using both the PCDG-still and PCDG-moving methods with $nx=200$ uniform grid cells. For comparison, we use the PCDG-moving method with $nx=2000$ uniform grid cells as the reference solution. The results are shown in Figure \ref{fig:WBDGcompare}. At $ t = 15 $, both the PCDG-still and PCDG-moving methods produce results that align well with the reference solution. However, at $ t = 60 $, the errors for the PCDG-still method become more pronounced compared to the PCDG-moving method, highlighting the advantage of the latter in handling the long-term evolution of non-zero initial velocity conditions.

\begin{figure}
\centering
\includegraphics[width=1\linewidth]{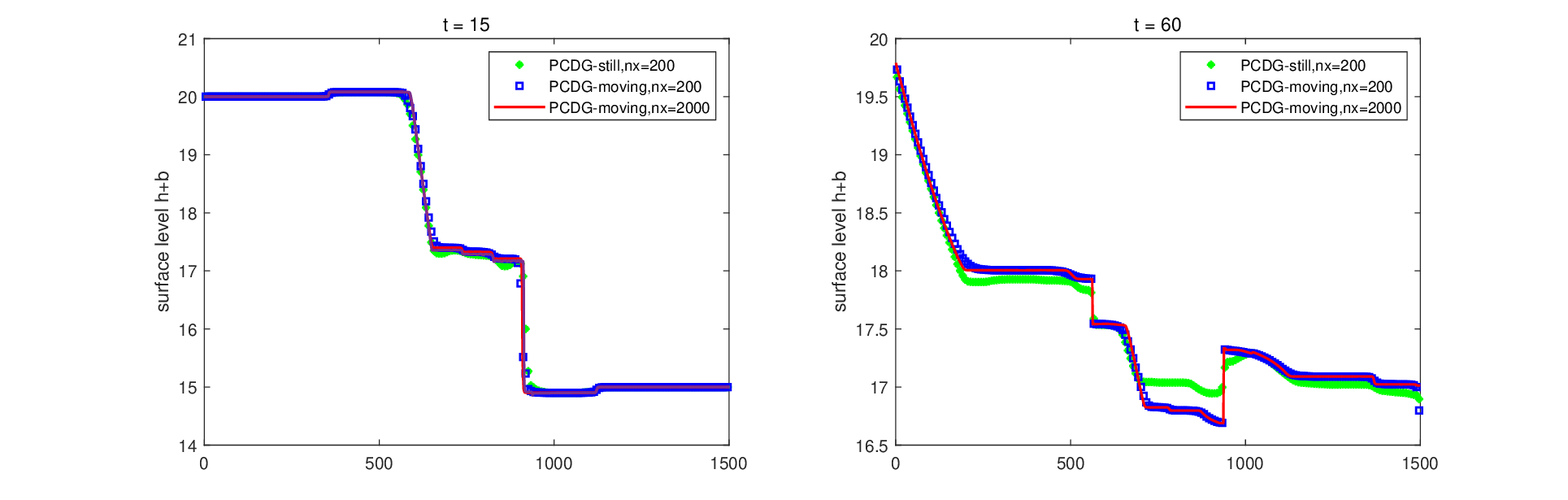}
\caption{\label{fig:WBDGcompare}Example \ref{exmp:dam_disc}. The dam-break test over a rectangular bump, using PCDG-still and PCDG-moving method with $nx=200$ and $nx=1000$ uniform cells at time $t = 15$ (left) and $t = 60$ (right).}
\end{figure}

\begin{example}\label{exmp:dam_square}
The dam-break test with square root velocity profile
\end{example}
This test case is adapted from the transient model comparison presented in \cite{koellermeier2022127166} and is used here to evaluate the performance of the proposed PCDG scheme in handling non-zero moments at a higher moment order $N = 8$. Similar to Example \ref{exmp:dam}, the initial water height follows a standard dam-break configuration:
\begin{equation}
    h(x,0)=\left\{
    \begin{array}{ll}
    3,& \text{if~}x\geq 0, \\
    1,& \text{if~}x < 0,
    \end{array}\right.
    \quad
    u_m(x,0)=0.25.
\end{equation}
The initial vertical velocity profile is prescribed as
\begin{equation}
    u(\zeta) = u_m+\sum_{i=1}^N\alpha_i\phi_i(\zeta)=\sqrt{\zeta},
\end{equation}
which is projected onto the orthonormal basis functions $\phi_i(\zeta)$ to determine the initial moment coefficients $\alpha_i(x,0)$. Following the approach in \cite{koellermeier2022127166}, the projection yields:
\begin{equation}
\begin{aligned}
    & \alpha_1 = -\frac{3}{5}, \quad \alpha_2 = -\frac{1}{7}, \quad \alpha_3 = -\frac{1}{15}, \quad \alpha_4 = -\frac{3}{77}, \\
    & \alpha_5 = -\frac{1}{39}, \quad \alpha_6 = -\frac{1}{55}, \quad \alpha_7 = -\frac{3}{221}, \quad \alpha_8 = -\frac{1}{95}.
\end{aligned}
\end{equation}

Figure \ref{fig:DBsquarecompare} presents the numerical results for the water height $h$, average velocity $u_m$, and moment coefficients $\alpha_1,\dots,\alpha_8$ at time $t = 0.04$. The simulation is conducted using $nx=400$ uniform cells with both the PCDG-still and PCDG-moving schemes. As shown in the figure, the two schemes produce nearly identical results for $h$ and $u_m$ and moment coefficients, demonstrating that both methods retain stability and accuracy even at elevated moment resolution.

\begin{figure}[htb]
\centering
\includegraphics[width=1\linewidth]{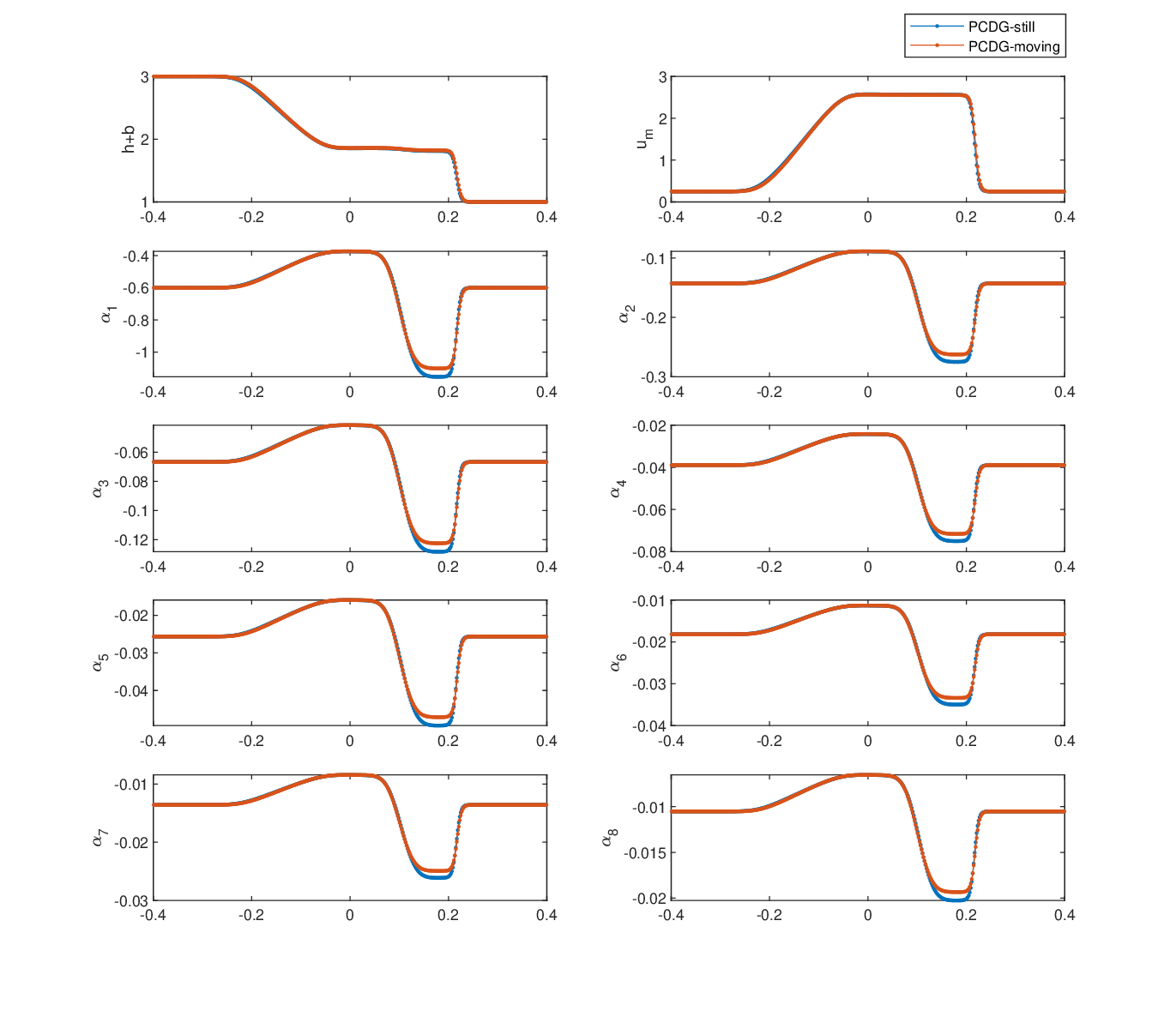}
\caption{\label{fig:DBsquarecompare}Example \ref{exmp:dam_square}. The dam-break test with square root velocity profile, using PCDG-still and PCDG-moving method with $nx=400$ uniform cells at time $t=0.04$.}
\end{figure}

\section{Conclusion}
\label{se-con}
We have developed two high-order well-balanced path-conservative discontinuous Galerkin methods for the shallow water linearized moment equations. For the still water equilibrium, by redefining the conservative variables and reformulating the system into a quasilinear form without explicit source terms, our methods preserve the equilibrium exactly without the need for hydrostatic reconstruction. For the moving water equilibrium, we extend the DG framework to equilibrium-preserving function spaces using linear segment paths, thereby ensuring the well-balanced property. Both theoretical analysis and numerical experiments confirm that the proposed methods maintain exact preservation of equilibrium states while achieving high-order accuracy. The methods also demonstrate robustness and high resolution in capturing complex flow dynamics. We expect that this framework can be generalized to other non-conservative systems with more intricate equilibrium structures. As part of future work, we aim to extend the methodology to more complex models, such as those involving sediment transport.

\section*{Funding}
The research of Julian Koellermeier was partially supported by the project \textit{HiWAVE} with file number VI.Vidi.233.066 of the \textit{ENW Vidi} research programme, funded by the \textit{Dutch Research Council (NWO)} under the grant \url{https://doi.org/10.61686/CBVAB59929}.   The research of Yinhua Xia was partially supported by Anhui Provincial Natural Science Foundation grant No. 2408085J004 and NSFC grant No. 12271498. The research of Yan Xu was partially supported by NSFC grant No. 12471347 and Laoshan Laboratory  (No.LSKJ202300305).


\bibliographystyle{abbrv}
\bibliography{sample}

\end{document}